\newtheorem{theorem}{Theorem}
\newtheorem{proposition}[theorem]{Proposition}
\newtheorem{corollary}[theorem]{Corollary}
\theoremstyle{definition}
\newtheorem{definition}[theorem]{Definition}
\newtheorem{example}[theorem]{Example}
\newcommand{\sym}{\mathcal{S}}
\newcommand{\mati}[1]{\left[\!\begin{array}{c}#1\end{array}\!\right]}
\newcommand{\matii}[3]{\left[\!\begin{array}{cc}#1&#2\\ &#3\end{array}\!\right]}
\newcommand{\matiii}[6]{\left[\!\begin{array}{ccc}#1&#2&#3\\ &#4&#5\\ &&#6\end{array}\!\right]}
\newcommand{\eps}{\emptyset} 
\newcommand{\Bal}{\mathrm{Bal}}
\newcommand{\BalMat}{\mathrm{BalMat}}
\newcommand{\balpha}{\boldsymbol{\alpha}}
\newcommand{\bbeta}{\boldsymbol{\beta}}
\newcommand{\blambda}{\boldsymbol{\lambda}}
\newcommand{\bkappa}{\boldsymbol{\kappa}}
\newcommand{\beps}{\boldsymbol{\eps}}
\newcommand{\CC}{\mathcal{C}} % The set of construction choices
\newcommand{\ITn}{\mathrm{InvTab}_n} % Set of inversion tables
\newcommand{\invtab}{\upsilon} % Default inversion table
\newcommand{\bkequiv}{\sim} % Beta kappa equivalence on fixed points
\newcommand{\alequiv}{\approx} % Alpha lambda equivalence on fixed points
\DeclareMathOperator{\dent}{Dent} % Distinct entries in inversion table
\newcommand{\iG}{H}
\newcommand{\irho}{\rho^{-1}}
\newcommand{\etal}{et al.\ }
\newcommand{\invTabOutline}[2]{
  \tikzstyle{disc} = [circle,thin,draw=black]
  %inner lines
  \foreach \x in {#2}
  \draw (\x,#1-1-\x) -- (\x,#1-1) (#1-1-\x,\x) -- (#1-1,\x);
  %Tidy
  \foreach \x in {#2}
  \draw (\x,#1) -- (\x,#1-1) (#1,\x) -- (#1-1,\x);
  \draw (0,#1) -- (0,#1-1) (#1,0) -- (#1-1,0);
  %boundary
  \draw(#1,0) -- (#1,#1) (0,#1) -- (#1,#1);
}
\newcommand{\pattern}[4]{
  \raisebox{0.6ex}{
  \begin{tikzpicture}
    [scale=0.35, baseline=(current bounding box.center), #1]
    \foreach \x/\y in {#4}
      \fill[pattern=north east lines, pattern color=black!45]
      (\x,\y) rectangle +(1,1);
    \draw (0.01,0.01) grid (#2+0.99,#2+0.99);
    \foreach \x/\y in {#3}
      \filldraw (\x,\y) circle (5pt);
  \end{tikzpicture}}\;
}
\newcommand{\nodestyle}{
  \tikzstyle{every node} = [font=\small];
}
\newcommand{\discstyle}{
  \tikzstyle{wht} =
    [ circle,fill=white,draw=black, minimum size=4.5pt, inner sep=0pt ];
}
\newcommand{\style}{
  \nodestyle
  \discstyle
}
\title[Decomposing labeled interval orders]{Decomposing labeled interval orders\\ as pairs of permutations}
\author{Anders Claesson}
\author{Stuart A. Hannah}
\address{Computer and Information Sciences\\ Livingstone
  Tower\\University of Strathclyde\\Glasgow\\Scotland}
\date{\today}
\subjclass[2000]{Primary 05A15 05A19}
\keywords{ballot matrix, composition matrix, sign reversing  involution, interval order, 2+2-free poset, Fishburn, ascent bottom}
\begin{document}

%%%%%%%%%%%%%%%%%%%%%%%%%%%%%%%%%%%%%%%%%%%%%%%%%%%%%%%%%%%%%%%%%%%%%%%%%%%%

\maketitle
\begin{abstract}
  We introduce ballot matrices, a signed combinatorial structure whose
  definition naturally follows from the generating function for labeled
  interval orders. A sign reversing involution on ballot matrices is
  defined. We show that matrices fixed under this involution are in
  bijection with labeled interval orders and that they decompose to a
  pair consisting of a permutation and an inversion table. To fully
  classify such pairs, results pertaining to the enumeration of
  permutations having a given set of ascent bottoms are given. This
  allows for a new formula for the number of labeled interval orders.
\end{abstract}

%\thispagestyle{empty}

%%%%%%%%%%%%%%%%%%%%%%%%%%%%%%%%%%%%%%%%%%%%%%%%%%%%%%%%%%%%%%%%%%%%%%%%%%%%

\section{Introduction} \label{introduction}
\thispagestyle{empty}

Recent work has employed the use of sign reversing involutions in the
study of unlabeled interval orders. Successes include taking
structures related to unlabeled interval orders directly to their
generating function \cite{levande-two-interp,yan} and
identifying statistical refinements \cite{remmel-tiefenbruck}.

In this paper we apply similar techniques to the labeled case.  We
introduce ballot matrices, a combinatorial structure consisting of
signed, upper triangular, non-row empty matrices whose entries are
ballots. The definition of such matrices follows naturally from the
generating function of labeled interval orders.  A bijection of Dukes
\etal\cite{comp-matrices-spec} is adapted to a surjection mapping
ballot matrices to labeled interval orders and used to define an
equivalence relation on ballot matrices.  A sign reversing involution
is then used to identify fixed points for which there is exactly one
per equivalence class. The decomposition of any single fixed point
into a pair consisting of a permutation and an inversion table is then
provided.  This allows for the main result of the paper, that the set
of labeled interval orders on $[n]$ is in bijection with two separate
sets. Firstly,
$$\{ (\pi,\tau) \in\sym_n \times \sym_n : A(\tau) \subseteq D(\pi) \},
$$
where $A(\tau)$ is the set of ascent bottoms of $\tau$, and $D(\pi)$ is
the set of descent positions of $\pi$. Secondly,
$$\{ (\pi, \tau) \in\sym_n \times \sym_n : D(\pi) \subseteq A(\tau) \}.
$$
As a consequence we derive a new formula for the number of
labeled interval orders on $[n]$:
\begin{align*}
  \sum_{\{s_1,\dots,s_k\}\subseteq [n-1]} \left(
    \det\left[ \binom{n-s_i}{s_{j+1}-s_i} \right]\cdot
    \prod_{r=1}^{k+1} r^{s_r - s_{r-1}}
  \right)
\end{align*}

where $s_0 = 0$ and $s_{k+1} =n$.

\subsection{Background}

A poset $P$ is said to be an \emph{interval order} if each $z \in P$
can be assigned a closed interval $[l_z,r_z] \in \mathbb{R}$ such that
$x<_P y$ if and only if $r_x < l_y$. Fishburn~\cite{fishburn}
demonstrates that interval orders are equivalently characterized as
posets with no induced subposet isomorphic to the pair of disjoint two
element chains, the so called $(2+2)$-free posets.

Bousquet-M{\'e}lou \etal\cite{two-plus-two-free} show that unlabeled
interval orders are in bijection with ascent sequences (a subset of
inversion tables), permutations avoiding the mesh pattern
$$\pattern{}{3}{1/2,2/3,3/1}{1/0,1/1,1/2,1/3,0/1,2/1,3/1},
$$
and a class of fixed point free involutions with no neighbor
nestings. Such involutions had previously been studied by
Zagier~\cite{zagier-chords} who determined their ordinary generating
function to be
$$\sum_{m \geq 0} \prod_{i=1}^m (1 - (1-x)^i).
$$
Levande~\cite{levande-two-interp} and Yan~\cite{yan} independently
employ the use of sign-reversing involutions to provide direct
interpretations of structures related to unlabeled interval orders
from Zagier's function.

To study labeled interval orders, Claesson
\etal\cite{part-comp-matrices} introduce composition matrices. A
\emph{composition matrix} is an upper triangular matrix on some
underlying set $U$ whose entries are sets partitioning $U$ satisfying
that there are no rows or columns which contain only empty set
partitions. They show that composition matrices have exponential
generating function
$$\sum_{m \geq 0} \prod_{i=1}^m (1 - e^{-xi}),
$$
again a function originally considered by
Zagier~\cite{zagier-chords}. They present a one-to-one
correspondence between labeled interval orders and composition
matrices via the Cartesian product of ascent sequences and set
partitions.

Dukes \etal\cite{comp-matrices-spec} give a direct bijection between
composition matrices and interval orders, where the downsets of
elements within the interval order are determined by hooks occurring
below the diagonal of the matrix.

%%%%%%%%%%%%%%%%%%%%%%%%%%%%%%%%%%%%%%%%%%%%%%%%%%%%%%%%%%%%%%%%%%%%%%%%%%%%

\section{Terminology and preliminaries}\label{terminology}

Throughout this text, for non-negative integers $a$ and $b$ with $a<b$,
let $[b]$ denote the set $\{ 1,\dots,b \}$ and $[a,b]$ the set
$\{a,\dots,b\}$.  This paper will feature three main combinatorial
structures: permutations, inversion tables and ballots. In this section
a summary is provided to remind the reader of relevant results
pertaining to these structures and to set the notational convention that
shall be followed.

\subsection{Permutations}
A permutation is a bijection on a finite set. A descent in a
permutation $\pi = a_1 a_2 \dots a_n \in \sym_n$ is a pair
$(a_i,a_{i+1})$ where $a_i > a_{i+1}$.
Following Stanley~\cite[Section 2.2]{enumerative-combinatorics-one}
let $D(\pi) = \{ i : a_i <a_{i+1}\}\subseteq [n-1]$ denote the set of
descent positions and define
\begin{alignat*}{3}
  \balpha_n(S) &= \{ \pi \in \sym_n:  D(\pi) \subseteq S \},\quad
  &\alpha_n(S)&=|\balpha_n(S)|, \\
   \bbeta_n(S) &= \{ \pi \in \sym_n:  D(\pi) = S \},\quad
  &\beta_n(S)&=|\bbeta_n(S)|.
\end{alignat*}
Let $S=\{s_1,s_2,\dots,s_k\}$ and $1\leq s_1<s_2<\dotsb<s_k<n$.  Also,
let $s_0 = 0$ and $s_{k+1}=n$. Partitioning $[n]$ into blocks of
cardinalities
$$s_1-s_0,\, s_2-s_1,\, \dots,\, s_{k+1}-s_k
$$
a permutation is formed by listing elements
within the blocks in increasing order and concatenating the blocks. The
only position in which a descent \emph{can} occur is at the join
between two blocks. Thus,
\begin{equation}\label{equation-alpha}
\alpha_n(S) = \binom{n}{s_1-s_0, s_2-s_1, \dots, s_{k+1}-s_k}.
\end{equation}
By the sieve principle we have that $\beta_n(S) = \sum_{T\subseteq S}
(-1)^{|S\setminus T|} \alpha_n(T)$.  One can show
\cite[Example 2.2.4]{enumerative-combinatorics-one} that this leads to the
formula
$$
\beta_n(S)
= \det\left[ \binom{n-s_i}{s_{j+1}-s_i} \right],
$$
where $(i,j)\in [0,k]\times [0,k]$.

\subsection{Inversion tables}

Given a permutation $\pi = a_1 a_2 \dots a_n$, an inversion in $\pi$
is a pair $(a_i,a_j)$ where $a_i > a_j$ and $i<j$.
An inversion table is an encoding of a permutation where the $i$th
value is the number of inversions in which $i$ is involved as the
smaller element. The set of inversion tables of length $n$ will be
denoted $\ITn$:
$$\ITn = \{\, b_1 b_2 \dots b_n  : b_i \in [0,n-i] \,\}.
$$
An inversion table may be viewed diagrammatically. To make clear the
relationship between inversion tables and $n$ by $n$ upper triangular
matrices containing exactly one entry per row we shall break
convention and view an inversion table as right aligned, decreasing
rows where an entry in row $i$ at column $j$ corresponds to the
inversion table with $i$th entry $n-j$. An example is shown in
Figure~\ref{figure-inversion-table}.

\begin{figure}
  \begin{center}
    \begin{tikzpicture}[line width=0.8pt, scale=0.38]
      \invTabOutline{6}{1,2,3,4,5}
      %filling
      \foreach \x [count=\i]\y in {4,3,5,5,6,6}
      \draw[fill=black] (\x -0.5,5.5-\i+1) circle (5pt);
      %column labels
%        \foreach \x [count=\i] in {0,1,2,3,4,5}{
%          \node[] at (6.5-\i,6.5) {\x};}
    \end{tikzpicture}
    \caption{Inversion table $231100$}
    \label{figure-inversion-table}
  \end{center}
\end{figure}

Define $\dent$ to be the function taking an inversion table to the set of
distinct entries it contains. For example, $\dent(430200) = \{0,2,3,4\}$.
We further say that $a\in [n-1]$ is \emph{missing} from a length $n$
inversion table if $a$ is not in its set of distinct entries.
For instance, $1$ and $5$ are both missing from $430200$.

\subsection{Ballots}

A \emph{ballot}, alternatively known as an ordered set partition, is a
collection of pairwise disjoint non-empty sets (referred to as blocks)
where the blocks are assigned some total ordering.  Adopting a
symbolic (or species) approach, let $L$ be the construction taking a
set $U$ to the set of linear orders built upon $U$. Also, let $E_+$ be
the non-empty set construction. That is, $E_+[U] = \{U\}$ if $U$ is
non-empty, and $E_+[\eps]=\eps$. Then define $\Bal$, the construction
of ballots, to be the composition $L(E_+)$:
$$\Bal = L(E_+) =  \sum_{k\geq 0} (E_+)^k.$$ 
%% Figure \ref{figure-list-of-ballots} shows all ballots built on $[3]$.
%%
%% \begin{figure}
%%   \begin{align*}
%%     &[\{1,2,3\}] \quad
%%     & [\{2,3\},\{1\}] \quad
%%     & [\{1,3\},\{2\}] \quad
%%     & [\{3\},\{1,2\}]\\
%%     & [\{3\},\{2\},\{1\}] \quad
%%     & [\{3\},\{1\},\{2\}] \quad 
%%     & [\{1,2\},\{3\}] \quad
%%     & [\{2\},\{1,3\}] \\
%%     & [\{2\},\{3\},\{1\}] \quad 
%%     & [\{2\},\{1\},\{3\}] \quad
%%     & [\{1\},\{2,3\}] \quad
%%     & [\{1\},\{3\},\{2\}]\\
%%     & [\{1\},\{2\},\{3\}] &&&
%%   \end{align*}
%%   \caption{Complete list of ballots built on $[3]$.}
%%   \label{figure-list-of-ballots}
%% \end{figure}

Consider \emph{signed ballots}, as above but where each ballot is
assigned to be either positive or negative.  A positive ballot
contains an even number of blocks and a negative ballot contains an
odd number of blocks. For any species $F$, let $-1\cdot F =-F$ be as
$F$ but with the sign of each object negated. Using $E^{-1}$ to refer
to signed ballots---the notation stemming from its role as the
symbolic multiplicative inverse of set---we have
$$E^{-1} = L(-E_+) = \sum_{k\geq 0} (-1)^k(E_+)^k.
$$
It follows that signed ballots have exponential generating function
\begin{equation}\label{Einv}
   \frac{1}{1 + (e^x-1)} = e^{-x} = \sum_{n \geq 0} (-1)^n \frac{x^n}{n!}.
\end{equation}
See, for example, Bergeron \etal\cite[Section 2.5]{bergeron}.

We use the notation $(E^{-1})^+$ to refer to the subset of signed
ballots which are positive and $(E^{-1})^-$ to refer to the subset
which are negative.
 
%%%%%%%%%%%%%%%%%%%%%%%%%%%%%%%%%%%%%%%%%%%%%%%%%%%%%%%%%%%%%%%%%%%%%%%%%%%%

\section{Ballot matrices and interval orders} \label{interval-orders-and-ballot-matrices}

Equation~\eqref{Einv} implies that the number of ballots constructed
on some set $U$ with an even number of blocks differ from the number
of ballots of $U$ with an odd number of blocks by $1$. To be precise
$$|(E^{-1})^{+}[U]| - |(E^{-1})^{-}[U]| = (-1)^{|U|}.
$$
An involution on ballots witnesses this fact. In the above equation
the sign of a ballot with $k$ blocks is $(-1)^k$.  Note that we can
change the sign of a ballot with $|U| \geq 2$ by splitting a
non-singleton block into two blocks or by merging two blocks. Let
$\omega=B_1\dots B_k$ be a ballot in $\Bal[U]$. That is, each $B_i$ is
non-empty and $U$ is the disjoint union of the sets $B_1$ through
$B_k$.

Take any linear order on $U$. Let $x = \min U$ be smallest element of
$U$. If $x\in B_i$ and $B_i$ contains at least two elements, then
delete $x$ from $B_i$ and create a new block $\{x\}$ to the immediate
right of $B_i$. For example,
$$\omega =
\{2,5\}\{1,4,6\}\{3\}\;\mapsto\; \{2,5\}\{4,6\}\{1\}\{3\}
= \xi.
$$
If $B_i=\{x\}$ and $i>1$ then delete this block from $\omega$ and add
$x$ to $B_{i-1}$. With $\omega$ and $\xi$ as in the example above, we
have $\xi\mapsto\omega$. If $B_1=\{x\}$ then proceed with the next
smallest element of $U$ and the ballot $B_2 B_3\dots B_k$. For example,
$$\{1\}\{2\}\{5\}\{4,6\}\{3\} \mapsto \{1\}\{2\}\{5\}\{3,4,6\}.
$$
For $U=\{u_1,u_2,\dots,u_n\}$ and $u_1< u_2 <\dots < u_n$ the
single fixed point under this sign reversing involution is
$\{u_1\}\{u_2\}\dots\{u_n\}$.

\subsection{Ballot Matrices}

The exponential generating function for the number of labeled interval
orders was shown by Claesson \etal\cite{comp-matrices-spec} to be a
function originally studied by Zagier~\cite{zagier-chords},
$$ \sum_{m\geq 0} \prod_{i=1}^m (1 - e^{-xi}) = \sum_{m\geq 0} (-1)^m\prod_{i=1}^m (e^{-xi} - 1).
$$

It it thus natural to consider the signed combinatorial structure
$$ \sum_{m\geq 0} (-1)^m\prod_{i=1}^m \big((E^{-1})^i - 1\big).
$$ An $((E^{-1})^i - 1)$-structure is a non-empty sequence of $i$
pairwise disjoint ballots. As such, a $(-1)^m\prod_{i=1}^m
\big((E^{-1})^i - 1\big)$-structure is an upper triangular $m\times m$
matrix of pairwise disjoint ballots such that each row is non-empty.

The sign of the matrix is the product of the signs of the
ballot entries and the signs of the rows. If $A$ is such a matrix and
the total number of blocks of all ballots in $A$ is $\ell$, then the sign
of $A$ is $(-1)^{\ell+m}$.  We shall call such matrices \emph{Ballot
matrices} and use the notation $\BalMat$ for the construction with
$\BalMat^{+}$ and $\BalMat^{-}$ the positive and negative parts
respectively. As an example, for $U=\{1,2\}$ we have
$$
\arraycolsep=0.8ex
\BalMat^{+}[U] = \textstyle{\left\{
    \mati{\{1,2\}},
    \matii{\eps}{\{1\}}{\{2\}},
    \matii{\eps}{\{2\}}{\{1\}},
    \matii{\{2\}}{\eps}{\{1\}},
    \matii{\{1\}}{\eps}{\{2\}}
  \right\}}
$$
and
$$\arraycolsep=0.8ex \BalMat^{-}[U]=\left\{ \mati{\{1\}\{2\}}, \mati{\{2\}\{1\}} \right\}.
$$

We note the similarity between ballot matrices and the composition
matrices of Claesson \etal\cite{part-comp-matrices}. The entries of
composition matrices are sets, which may be viewed as either
as ballots with a single block or as ballots where each element is contained
within its own singleton block and the blocks are ordered according to the
order on $U$.  Therefore composition matrices are a subset of ballot
matrices. For our purposes we wish to define an involution whose fixed
points are either all positive or all negative for any given
$U$. However for both interpretations of composition matrices as
ballot matrices the sign is not consistent, there exist both positive
and negative composition matrices when $|U| \geq 2$, and hence they
are not suitable candidates for the fixed points of our involution.

Dukes \etal\cite{comp-matrices-spec} provide a direct bijection
between composition matrices and labeled interval orders. We adapt
their mapping to define a surjection taking ballot matrices to labeled
interval orders as follows.

\begin{definition}\label{poset-definition}
  Let $A\in\BalMat[U]$, and let $x$ and $y$ be elements of
  $U$. Further, let $\omega$ and $\xi$ be the ballot entries $(i,j)$
  and $(i',j')$ of $A$ such that $x$ is contained in the underlying set
  of $\omega$ and $y$ is contained in the underlying set of
  $\xi$. Define the poset $P(A)$ by declaring that $x < y$ in $P$ if
  $j < i'$.
\end{definition}

In other words, $x<y$ in $P$ if the ``hook'' from $x$ to $y$ passing
through $(i',j)$ goes below the diagonal:
$$
\begin{tikzpicture}
  \matrix [row sep=1.3ex,column sep=2.2ex, left delimiter={[},right delimiter={]}]
  {
    \node (b){}; & & &             & & & \node (a){}; \\
                 & & & \node(x){}; & & & \\
                 & & &             & & & \\
                 & & &             & & & \\
                 & & & \node(z){}; & &\node (y){}; & \\
                 & & &             & & & \\
                 & & &             & & & \node (c){}; \\
  };
  \filldraw [blue!12!white] (a.north east) -- (b.north west) -- (c.south east) -- cycle;
  \node at (x.north) {$x$};
  \node at (y.east) {$y$};
  \draw[dotted, very thick] (x.south) -- (z.center) -- (y.west);
\end{tikzpicture}.
$$

Equivalently, the strict downset of $y$ is the union of columns $1$
through $i'-1$. Figure~\ref{figure-poset} shows an example of a ballot matrix
and its corresponding poset.

\begin{figure}[h!]
  $$
  {\left[\!
  \begin{array}{cccc}
        \eps  & \{6\}   & \eps  & \{ 4,5 \} \\
              & \eps    & \{3\} & \eps      \\
              &         & \{1\} & \eps      \\
              &         &       & \{2\}     \\
  \end{array}\!
  \right]}\qquad\quad
  \begin{tikzpicture}[xscale=0.5, yscale=0.7, semithick, baseline=12]
    \style;
    \node [wht] (a) at (-1,0) {};
    \node [wht] (b) at ( 0,0) {};
    \node [wht] (c) at ( 1,0) {};
    \node [wht] (d) at ( 2,0) {};
    \node [wht] (e) at (-1,1) {};
    \node [wht] (f) at ( 0,2) {};
    \draw (a) node[below=2pt] {6} -- (e) node[left=2pt] {1} -- (f) node[right=2pt] {2};
    \draw (b) node[below=2pt] {3} -- (f);
    \draw (c) node[below=2pt] {4};
    \draw (d) node[below=2pt] {5};
  \end{tikzpicture}
  $$
  \caption{A ballot matrix and its corresponding poset}
  \label{figure-poset}
\end{figure}

Given a poset $P$, the downset of $x \in P$ is the set of elements
smaller than $x$:
$$D(x) = \{ y \in P : y<x \}.
$$
It is a well known that a poset is an interval order if and
only if there is a linear ordering by inclusion on the downsets of
each element $\{D(x): x \in P \}$ (see, for example,
Bogart~\cite{bogart}). As the mapping states that the strict downset
of $y$ is the union of columns $1$ through $i'-1$ there is a linear
ordering on downsets and hence every poset which is mapped to must be
an interval order.

In addition, composition matrices are a subset of ballot matrices and
as Dukes \etal\cite{comp-matrices-spec} show that for composition
matrices the mapping is a bijection it follows that the adapted
mapping is a surjection.

If we declare that two ballot matrices in $\BalMat[U]$ are equivalent if
they determine the same interval order, then, by definition, there are
as many equivalence classes as there are interval orders on $U$. In the
next section we define as sign reversing involution that respects this
equivalence relation.

%%%%%%%%%%%%%%%%%%%%%%%%%%%%%%%%%%%%%%%%%%%%%%%%%%%%%%%%%%%%%%%%%%%%%%%%%%%%

\section{The involution}\label{the-involution}

We now define the involution on ballot matrices. We begin by applying
the ballot involution componentwise to entries of $\BalMat$.

Choose a canonical linear order of the entries of the matrix; for
instance, order the entries (ballots) with respect to their minimum
element, or order them lexicographically with respect to their
position $(i,j)$ in the matrix. Then apply the ballot involution to
the first entry that is not fixed, if such an element exists, and
denote this operation $\psi$. A matrix is a fixed point under this
sign reversing involution if and
only if each entry of the matrix is fixed, and thus of the form
$$\{a_1\}\{a_2\}\dots\{a_j\}
\quad \mbox{with} \quad a_1 < a_2 < \dots< a_j.
$$
Note that if $A$ is a $k\times k$ matrix fixed under $\psi$, then the
sign of $A$ is $(-1)^{n+k}$, where $n=|U|$. We shall define a sign
reversing involution $\varphi$ on the fixed points of $\psi$.

Let $A\in\BalMat[U]$ be a matrix fixed under $\psi$.  Let $x\in U$ and
assume that that $x$ is on row $i$ and column $j$ of $A$. We say that
$x$ is a \emph{pivot} element of $A$ if row $i$ contains at least two
elements of $U$ and $x$ is the smallest element on row $i$, or the
following three conditions are met:

\begin{enumerate}
\item
  column $i$ is empty;
\item
  $\{x\}$ is the only non-empty ballot on its row;
\item
  $x$ is smaller than the minimum element of row $i+1$ of $A$.
\end{enumerate}

As an illustration, the pivot elements of the matrix
$$\left[\!
\begin{array}{ccccc}
  \eps & \{4\}      & \eps & \eps       &\eps  \\
       & \{6\}\{8\} & \eps & \{3\}\{7\} &\eps  \\
       &            & \eps & \{2\}      &\eps  \\
       &            &      & \{9\}      &\{5\} \\
       &            &      &            &\{1\}
\end{array}
\!\right]
$$
are $2$, $3$ and $5$.

If the set of pivot elements of $A$ is empty, then let
$\varphi(A)=A$. Otherwise, let $x$ be the smallest pivot element of $A$,
and assume that $x$ belongs to the $(i,j)$ entry of $A$.

\begin{enumerate}
\item
  If there is more than one element on row $i$, then remove $x$ from row
  $i$ and make a new row immediately above row $i$ with the block
  $\{x\}$ in column $j$ and the rest of the entries empty. Also insert a
  new empty column $i$, pushing the existing columns one step to the
  right.\medskip
\item
  If column $i$ is empty, $\{x\}$ is the only non-empty ballot on its
  row, and $x$ is smaller than the minimum element of row $i+1$, then
  remove column $i$ and merge row $i$ with row $i+1$ by inserting the
  singleton block ${x}$ at the front of the ballot in position $(i+1,j)$.
\end{enumerate}

Applying $\varphi$ to the example matrix above we get
$$\left[\!
\begin{array}{ccccc}
  \eps & \{4\}      & \eps       &\eps  \\
       & \{6\}\{8\} & \{3\}\{7\} &\eps  \\
       &            & \{2\}\{9\} &\{5\} \\
       &            &            &\{1\}
\end{array}
\!\right].
$$
Note that the smallest pivot element of this matrix is still $2$, and
applying $\varphi$ to it would bring back the original matrix.

Our main involution $\eta:\BalMat[U]\to\BalMat[U]$ is then defined as
the composition of $\psi$ and $\varphi$ in the following sense:
$$\eta(A) =
\begin{cases}
  \varphi(A) &\text{ if $\psi(A) = A$},\\
  \psi(A)    &\text{ if $\psi(A) \neq A$.}
\end{cases}
$$
It is clear that $\eta$ is sign reversing. That any fixed point of
$\eta$ has positive sign will be seen in
Section~\ref{the-fixed-points}.

\begin{proposition}\label{involution-preserves-interval-order}
  The involution $\eta$ preserves the interval order in the following
  sense. Let $A \in \BalMat[U]$. Let $P$ and $Q$ be the interval orders
  corresponding to $A$ and $\eta(A)$, respectively. Then $P = Q$.
\end{proposition}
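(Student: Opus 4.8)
The plan is to reduce the statement to a purely positional description of the interval order and then check that this description is invariant under each constituent of $\eta$. Following Definition~\ref{poset-definition}, for an element $u$ lying in the ballot entry at row $r$ and column $c$ write $\mathrm{row}(u)=r$ and $\mathrm{col}(u)=c$. The defining condition ``$x<y$ in $P$ iff $j<i'$'' then reads
$$ x <_P y \iff \mathrm{col}(x) < \mathrm{row}(y), $$
so $P(A)$ is completely determined by the assignment $u \mapsto (\mathrm{row}(u),\mathrm{col}(u))$ — equivalently, by giving $u$ the interval $[\mathrm{row}(u),\mathrm{col}(u)]$ and taking the induced interval order. Since $\eta(A)=\psi(A)$ when $\psi(A)\neq A$ and $\eta(A)=\varphi(A)$ when $\psi(A)=A$, it suffices to prove that both $\psi$ and $\varphi$ leave every comparison $\mathrm{col}(u)<\mathrm{row}(v)$ unchanged.

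The case of $\psi$ is immediate: the ballot involution only splits or merges blocks \emph{inside} a single entry, so every element keeps its matrix position $(i,j)$; hence $\mathrm{row}$ and $\mathrm{col}$ are unchanged for all elements and $P(\psi(A))=P(A)$. For $\varphi$ there is nothing to prove when $A$ has no pivot. Otherwise $\varphi$ acts through one of its two operations, and since these are mutually inverse (applying $\varphi$ to $\varphi(A)$ restores $A$ with the same pivot), it is enough to treat the merging operation~(2). So let $x$ be the smallest pivot, at position $(i,j)$, with column $i$ empty, $\{x\}$ the only nonempty block of row $i$, and $x<\min(\text{row } i+1)$; then $\varphi$ deletes the empty column $i$ and merges row $i$ into row $i+1$.

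The heart of the argument, and the main obstacle, is the boundary bookkeeping for operation~(2), where deleting a column and fusing two rows shift indices non-uniformly. I would track the two transformations explicitly: the column map $f$ fixes $c<i$ and sends $c>i$ to $c-1$ (no element sits in column $i$), while the row map $g$ fixes $r<i$, sends $r\ge i+2$ to $r-1$, and collapses old rows $i$ and $i+1$ to the new row $i$. The claim to verify is that
$$ \mathrm{col}(u) < \mathrm{row}(v) \iff f(\mathrm{col}(u)) < g(\mathrm{row}(v)) $$
for every ordered pair $(u,v)$. This follows from a short case analysis on whether $\mathrm{col}(u)$ is $<i$ or $>i$ (it is never $=i$) and whether $\mathrm{row}(v)$ is $<i$, $=i$ (only for $v=x$), $=i+1$, or $>i+1$; the only delicate lines are those meeting the seams $r=i,i+1$ and $c=i$, and there the facts that column $i$ carries no element and that row $i$ carries only $x$ are precisely what prevent any comparison from being created or destroyed. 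Combining this verification with the $\psi$ computation and the reduction of operation~(1) to operation~(2) gives $P=Q$ in all cases.
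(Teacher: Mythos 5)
Your proposal is correct and takes essentially the same route as the paper: dispose of the fixed-point and $\psi$ cases by noting positions (hence downsets) are untouched, then verify that the reindexing induced by one of $\varphi$'s two operations preserves the comparison $\mathrm{col}(x)<\mathrm{row}(y)$ by a case analysis on positions relative to the affected row and column, and obtain the other operation by inversion. The only cosmetic difference is that you treat the delete-and-merge operation explicitly and deduce the insertion case by reversal, whereas the paper does the opposite.
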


\begin{proof}
  If $A$ is a fixed point of $\eta$, equality is immediate. Further, the
  block structure of the elements of $A$ is immaterial to the definition
  of the poset. Thus, if $\psi(A) \neq A$ and $\eta(A) = \psi(A)$, then
  equality is immediate. For the remainder of the proof assume that
  $\eta(A)=\varphi(A)\neq A$.

  The proof that the involution preserves the interval order is
  equivalent to saying that the strict downset of each element is
  preserved. This follows from a case analysis. Recall that the strict
  downset of $x$ at position $(i,j)$ in the matrix is the union of
  columns $1$ through $i-1$.

  Let $B=\eta(A)$. The involution has two possibilities. If the minimal
  pivot element $x$ at position $(i,j)$ in $A$ is not the only element
  on its row, then $B$ is formed by initially inserting a new empty row
  above row $i$ and a new empty column before column $i$. The pivot
  element $x$ is moved to the new row maintaining its column and hence
  its strict downset is unchanged.

  We now demonstrate that the insertion of the new empty row at
  position $i$ and new empty column at position $i$ preserves hooks
  below the diagonal. For $y \not = x$ at position $(i',j')$
  in $A$ there are three possibilities.

  \begin{enumerate}
  \item The element $y$ is above the newly inserted row and to the left
    of the new column, i.e.\ $y$ remains at position $(i',j')$ in $B$
    with $i'<i$ and $j'<i$. Then the new column is inserted to the right
    of the columns which form the strict downset of $y$ and hence the
    downset is unchanged.\medskip

  \item The element $y$ is to the right of the newly inserted column and
    above the inserted row, i.e.\ $y$ is at position $(i',j'+1)$ in $B$
    with $i'<i<j'$. Again as $i'<i$ the new column is inserted to the
    right of the columns which form the strict downset of $y$ and the
    downset is unchanged.\medskip

  \item The element $y$ is below the newly inserted row and to the right
    but of the new column, i.e.\ $y$ is at position $(i'+1,j'+1)$ in $B$
    with $i<i'$ and $i<j'$. As $i<i'$, the number of columns which form
    the downset of $y$ is increased by 1.  The newly inserted column $i$
    is empty and therefore contributes no new entries. As $i<i'$ the
    previous rightmost column $i'-1$ is shifted one place to the right
    to column $i'+1$ in the new matrix. The downset of $y$ in $B$ is
    therefore the union of elements $1$ through $i'$ and hence the
    downset is unchanged.
  \end{enumerate}

  Note that $x$ remains the pivot element in the newly constructed
  matrix $B$, the only non-empty ballot on its row, and with column
  $i$ empty. Therefore showing that the second possibility of the
  involution preserves posets follows from taking the reverse of the
  above cases.

  As the strict downsets are equal the posets are equal.
\end{proof}

%%%%%%%%%%%%%%%%%%%%%%%%%%%%%%%%%%%%%%%%%%%%%%%%%%%%%%%%%%%%%%%%%%%%%%%%%%%%

\section{Fixed points}\label{the-fixed-points}

A fixed point under the sign reversing involution $\eta$ on $\BalMat$ is
an $n \times n$ matrix with no pivot elements, equivalently a matrix
such that
\begin{enumerate}
\item
  there is exactly one element per row;
\item
  if $a<b$, with $a$ on row $i$, and $b$ on row $i+1$, then column $i$
  is non-empty.
\end{enumerate}
Note that the total number of blocks in such a matrix is $n$---each
element is in its own block---and thus it has sign $(-1)^{2n} = 1$,
positive.

Further, matrices which satisfy these conditions can be decomposed to a
pair consisting of a permutation and an inversion table:  As there is
exactly one element per row, a permutation $\pi = a_1 \dots a_n$ can be
read setting each $a_i$ the value held in row $i$.  As the matrix is
also upper triangular, the position of the element in a row specifies an
inversion table $b_1 b_2 \dots b_n$ where each $b_i$ is $n$ minus the
column in which the entry in row $i$ occurs.

As an example, consider the matrix below. It decomposes into the
permutation $4132$ together with the inversion table $2010$:
$$
\left[\!
  \begin{array}{cccc}
    \eps & \{4\} & \eps  & \eps  \\
    & \eps  & \eps  & \{1\} \\
    &       & \{3\} & \eps  \\
    &       &       & \{2\} \\
  \end{array}\!
\right]
\;\simeq\;
\left(\,4132,\;\;
  \begin{tikzpicture}[line width=0.8pt, scale=0.25, baseline=13pt ]
    \invTabOutline{4}{1,2,3}
    \foreach \x [count=\i] in {2,0,1,0}{
      \draw[fill=black] (3.5 - \x,4.5 - \i) circle (5pt);}
  \end{tikzpicture}\,\,\right)
\;\simeq\;
\left(\,4132,\,2010\,\right).
$$

Take the equivalence class  on ballot matrices where two matrices are
equivalent if they correspond to the same interval order. We wish to
show that there is exactly one fixed point under $\eta$ per equivalence
class. For this purpose and to make explicit the link to previous
work we provide a bijection between composition matrices and
ballot matrices.

For the following, take the structure of the entries of a composition
matrix to be ballots where each element is contained within a
singleton block and the blocks are ordered according to the order on
the underlying set.

Given an $m \times m$ ballot matrix $A\in\BalMat[U]$, let $u_i$ be the
smallest element on the $i$th row of $A$, and define
$G(A) = U \setminus \{u_1,u_2,\dots,u_m\}$.

Assuming that $G(A)$ is non-empty, define $\rho$ to be the following
operation. Take $x = \min G(A)$ at position $(i,j)$ in $A$. Insert a
new row containing only empty ballots above row $i$ and a new column
containing only empty ballots to the left of column $i$. Move $x$ to
create a singleton ballot in the new row preserving its column. Note
that $|G(\rho(A))| = |G(A)|-1$. An example with $G(A)=\{4,6\}$ is given
below:
$$
\left[\!
  \begin{array}{cccc}
        \{2,6\} & \eps    & \eps & \eps    \\
                & \{3\}   & \eps & \boldsymbol{\{}\mathbf{4}\boldsymbol{\}} \\
                &         & \eps & \{1\}   \\
                &         &      & \{5\}   \\
  \end{array}\!
  \right]
\,\stackrel{\rho}{\longmapsto}\,
\left[\!
  \begin{array}{ccccc}
    \{2,6\} & \beps &\eps   & \eps  & \eps    \\
            & \beps &\beps  & \beps & \boldsymbol{\{}\mathbf{4}\boldsymbol{\}}\\
            &       & \{3\} & \eps  & \eps    \\
            &       &       & \eps  & \{1\}   \\
            &       &       &       & \{5\}   \\
  \end{array}\!
  \right].
$$

The inverse operation will be denoted $\irho$. To state it explicitly,
let $A\in\BalMat[U]$ be a $m\times m$ ballot matrix, and let $u_i$ be
the smallest element on the $i$th row of $A$, as before. Then take
$\iG(A)$ to be the subset of $\{u_1,u_2,\dots,u_{m-1}\}$ consisting of those
$u_i$ such that the following three conditions hold: column $i$ is
empty; $u_i$ is the sole element on row $i$; and $u_i>u_{i+1}$.

Assuming that $\iG(A)$ is non-empty, define $\irho$ to be the following
operation.  Take $x = \max \iG(A) $ at position $(i,j)$ in $A$. Append
$x$ in a singleton block at the end of the ballot in position $(i+1,j)$,
then remove row and column $i$.

\begin{proposition}\label{uniqueness-of-fixed-point}
  There is a bijection between composition matrices and ballot
  matrices fixed under $\eta$. As a result there is a unique ballot
  matrix fixed under $\eta$ per equivalence class.
\end{proposition}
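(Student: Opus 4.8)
The plan is to realize the bijection through the mutually inverse operations $\rho$ and $\irho$. Given a composition matrix $C$, observe first that $\iG(C)=\emptyset$: a composition matrix has no empty columns, whereas every element of $\iG$ sits above an empty column, so only $\rho$ is available. Since $|G(\rho(A))|=|G(A)|-1$, iterating $\rho$ terminates after $|G(C)|$ steps in a matrix $\Phi(C)$ with $G=\emptyset$, that is, with exactly one element per row. Dually, a fixed point $F$ of $\eta$ has $G(F)=\emptyset$, so only $\irho$ is available, and iterating it (each step merging two rows) terminates in a matrix $\Psi(F)$ with $\iG=\emptyset$. I would then argue that $\Phi$ and $\Psi$ are inverse bijections between composition matrices and fixed points of $\eta$.

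The engine is the one-step identity $\max\iG(\rho(A))=\min G(A)$. Writing $x=\min G(A)$, the operation $\rho$ places $x$ alone in a fresh row directly above a freshly inserted empty column; because $x$ exceeds the minimum of the row it was removed from (that minimum being a row-minimum of $A$, hence not in $G$), the three defining conditions of $\iG$ hold for $x$ in $\rho(A)$, and a short check shows no larger candidate is created. Thus a single $\irho$ reverses a single $\rho$, and iterating shows $\Psi$ undoes $\Phi$ step by step. What remains—and this is deferred to the next step—is that the iterations halt exactly at composition matrices, respectively at fixed points.

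This landing is the step I expect to be the main obstacle. A matrix with $G=\emptyset$ need \emph{not} be a fixed point, and a matrix with $\iG=\emptyset$ need \emph{not} be a composition matrix, so the required type cannot be read off the stopping condition alone: one must control the \emph{second} fixed-point condition (no pivot of the form ``column $i$ empty with $u_i<u_{i+1}$'') and the absence of empty columns in $\Psi(F)$. The crux is the ordering constraint across empty columns. I would maintain, as an invariant of the $\rho$-iteration, that every empty column created sits above a strict descent $u_i>u_{i+1}$ of row-minima, so that no pivot of the second kind survives at termination; the reverse bookkeeping then governs the $\irho$-iteration. Getting this invariant to propagate through the interaction of the $\min$/$\max$ choices with the placement of the inserted column is precisely the delicate part.

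For the ``as a result'' clause I would combine three ingredients. By Dukes \etal composition matrices are in bijection with interval orders, so the $\Phi$-image supplies one fixed point per interval order; by the same downset analysis as in Proposition~\ref{involution-preserves-interval-order}, $\rho$ preserves the interval order, so $\Phi(C)$ lies in the equivalence class of $C$. The bijection is therefore class-preserving, giving at least one fixed point in each class, and the count $\#\{\text{fixed points}\}=\#\{\text{composition matrices}\}=\#\{\text{classes}\}$ then forces exactly one. Equivalently, since $\eta$ is sign reversing with only positive fixed points, the signed sum over each class equals its number of fixed points, which the bijection pins to $1$.
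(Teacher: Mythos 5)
Your route is the same as the paper's: iterate $\rho$ on a composition matrix until $G$ is empty, iterate $\irho$ on a fixed point until $\iG$ is empty, check the two processes are mutually inverse, and transfer uniqueness-per-class from the Dukes \etal bijection via preservation of the interval order (your closing counting argument is also essentially what the paper does). But the one step you explicitly defer --- the ``landing'', i.e.\ the invariant that every empty column created during the $\rho$-iteration sits above a strict descent $u_i>u_{i+1}$ of row minima --- is where all the content lies, and as your outline stands it is a genuine gap: with $\rho$ extracting $x=\min G(A)$ as defined, that invariant does \emph{not} propagate. Concretely, start from the $1\times 1$ composition matrix with entry $\{1,2,3\}$. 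Processing $x=2$ and then $x=3$ gives
$$\arraycolsep=0.8ex
\mati{\{1\}\{2\}\{3\}}\;\longmapsto\;\matii{\eps}{\{2\}}{\{1\}\{3\}}\;\longmapsto\;\matiii{\eps}{\eps}{\{2\}}{\eps}{\{3\}}{\{1\}},$$
a matrix with $G=\eps$ in which column $1$ is empty while $u_1=2<u_2=3$: the element $2$ is a pivot, so this is not fixed under $\eta$ (and indeed it does not appear in Figure~\ref{figure-list-of-fps}). The failure mode is exactly the interaction you flag as delicate: the row inserted for a \emph{later}, hence \emph{larger}, element of $G$ can land immediately below an earlier-created empty column and destroy its descent.

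The outline can be repaired by reversing the processing order: take $x=\max G(A)$ in $\rho$ (and $\min\iG$ in $\irho$). Then the singleton row sitting directly above any previously created empty column contains an element strictly larger than everything inserted afterwards, so each such descent survives every subsequent insertion, the invariant propagates, and the terminal matrix is a genuine fixed point; the reverse bookkeeping for $\irho$ then shows the backward iteration removes every empty column and lands on a composition matrix. Note also that your ``engine'' identity $\max\iG(\rho(A))=\min G(A)$ is not automatic: a row minimum of $A$ other than $x$ can belong to $\iG(\rho(A))$ and exceed $x$, so the step-by-step inversion likewise needs the reordered extraction (or a reachability argument) to go through. For what it is worth, the paper's own proof asserts the invariant in one sentence (``$\rho$ only introduces an empty column $i$ when $a>b$'') and overlooks the same interaction, so you have correctly located the weak point of the argument rather than merely omitted a routine verification.
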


\begin{proof}
  We first show that successive application of the mapping $\rho$
  gives an injection from composition matrices into ballot matrices
  fixed under $\eta$.

  The same argument as in
  Proposition~\ref{involution-preserves-interval-order} shows that
  $\rho$ preserves the interval order.

  Take a composition matrix. Let $A$ be the matrix returned after
  repeated application of $\rho$ until the set of elements $G(A)$
  is empty. We claim $A$ is a ballot matrix fixed under $\eta$.

  From definition we know that $G(A)$ is empty. Therefore there is
  exactly one element per row. The other requirement to be a fixed point
  under $\eta$ is that if $a<b$ with $a$ on row $i$ and $b$ on row $i+1$
  then column $i$ must be non-empty. As composition matrices have the
  property that all columns are non-empty and $\rho$ only introduces an
  empty column $i$ when $a>b$ with $a$ on row $i$, this requirement is
  met.

  Repeated application of $\rho$ is therefore a mapping between
  composition matrices and ballot matrices fixed under $\eta$ with
  injectivity following from the preservation of interval order.

  As $\rho$ preserves the interval order, the reverse operation
  $\irho$ also preserves the interval order.

  Take a fixed point matrix. Let $A$ be the matrix returned after
  repeated application of $\irho$ until the set of elements $\iG(A)$ is
  empty. We claim $A$ is a composition matrix.

  Composition matrices are neither row nor column empty. Non-row empty
  is a property of fixed point ballot matrices and $\irho$ does not
  introduce any empty columns. If a fixed point matrix contains an empty
  column $i$ then from definition there is an $a>b$ with $a$ and $b$ on
  rows $i$ and $i+1$ respectively. However as $G(A)$ is empty it follows
  that all empty columns are removed.

  Hence all fixed point matrices can be mapped to a composition
  matrices with the interval order preserved by repeated application
  of $\irho$, giving surjectivity.
\end{proof}

Let $\BalMat^\eta[U]$ denote the set of fixed points under $\eta$.
Writing simply $x$ for the ballot $\{x\}$, the complete list of matrices
in $\BalMat^\eta[3]$ is given in Figure~\ref{figure-list-of-fps}.
\begin{figure}
  \begin{align*}
  & \matiii{3}{\eps}{\eps}{2}{\eps}{1}
    \matiii{3}{\eps}{\eps}{\eps}{2}{1}
    \matiii{\eps}{3}{\eps}{2}{\eps}{1}
    \matiii{\eps}{3}{\eps}{\eps}{2}{1}
    \matiii{\eps}{\eps}{3}{2}{\eps}{1}\\
  & \matiii{\eps}{\eps}{3}{\eps}{2}{1}
    \matiii{3}{\eps}{\eps}{1}{\eps}{2}
    \matiii{\eps}{3}{\eps}{1}{\eps}{2}
    \matiii{\eps}{3}{\eps}{\eps}{1}{2}
    \matiii{\eps}{\eps}{3}{1}{\eps}{2}\\
  & \matiii{2}{\eps}{\eps}{3}{\eps}{1}
    \matiii{2}{\eps}{\eps}{\eps}{3}{1}
    \matiii{2}{\eps}{\eps}{1}{\eps}{3}
    \matiii{\eps}{2}{\eps}{1}{\eps}{3}
    \matiii{\eps}{2}{\eps}{\eps}{1}{3}\\
  & \matiii{\eps}{\eps}{2}{1}{\eps}{3}
    \matiii{1}{\eps}{\eps}{3}{\eps}{2}
    \matiii{1}{\eps}{\eps}{\eps}{3}{2}
    \matiii{1}{\eps}{\eps}{2}{\eps}{3}
  \end{align*}
  \caption{Complete list of matrices in $\BalMat^\eta[3]$}
  \label{figure-list-of-fps}
\end{figure}

%%%%%%%%%%%%%%%%%%%%%%%%%%%%%%%%%%%%%%%%%%%%%%%%%%%%%%%%%%%%%%%%%%%%%%%%%%%%

\section{Permutations from ascent bottoms}\label{ascent-bottom}

In order to examine the fixed points under $\eta$ we shall consider
how to characterize the pairs resulting from their decomposition to a
permutation and an inversion table. For this purpose, this section is
concerned with counting the number of permutations whose set
of ascent bottoms is equal to some given set. Bijections between such
permutations and two different sets of inversion tables are
provided. We make repeated use of the sieve principle and our
presentation follows that of Stanley~\cite[Section
  2.2]{enumerative-combinatorics-one}.

Recall the definitions of $\balpha_n(S)$ and $\bbeta_n(S)$:
\begin{alignat*}{4}
  \balpha_n(S) &= \{ \tau \in \sym_n:  D(\tau) \subseteq S \},\quad
  &\alpha_n(S) &=|\balpha_n(S)|, \\
   \bbeta_n(S) &= \{ \tau \in \sym_n:  D(\tau) = S \},\quad
  &\beta_n(S)  &=|\bbeta_n(S)|.
\end{alignat*}
In an analogous fashion, for $\pi = a_1 a_2 \dots a_n \in \sym_n$, let
$$A(\pi) = \{ a_i : i \in [n-1], a_i < a_{i+1} \}
$$
be the set of ascent bottoms of $\pi$. Let
\begin{alignat*}{3}
   \bkappa_n(S) &= \{ \pi \in \sym_n:  A(\pi) \subseteq S \},\quad
  &\kappa_n(S)  &=|\bkappa_n(S)|, \\
  \blambda_n(S) &= \{ \pi \in \sym_n:  A(\pi) = S \},\quad
  &\lambda_n(S) &=|\blambda_n(S)|.
\end{alignat*}
Note that by definition  $\kappa_n(S) = \sum_{T\subseteq S} \lambda_n(T)$, and
by the sieve principle,
$\lambda_n(S) = \sum_{T\subseteq S} (-1)^{|S\setminus T|} \kappa_n(T)$.

The following set of sequences will be convenient as an intermediate
structure for later proofs.
\begin{definition}
  For fixed $n$, let $S= \{ s_1, \dots, s_k \}$ with $1\leq s_1< \dots <s_k<n$
  be given. Also, set $s_0=0$ and $s_{k+1}=n$. Define the Cartesian product
  $$\CC_n(S) = [0,k]^{s_{k+1}-s_k} \times \dots\times [0,1]^{s_2-s_1}\times [0,0]^{s_1-s_0}.
  $$
  We shall call an element of $\CC_n(S)$ a \emph{construction choice}.
\end{definition}

As example, for $n=8$ and $S=\{3,5,6,7\}$ we have $s_1-s_0=3$,
$s_2-s_1=2$, and $s_3-s_2=s_4-s_3=s_5-s_4=1$. Thus
$$\CC_n(S) =
[0,4]\times [0,3] \times [0,2] \times [0,1]
\times [0,1] \times [0,0] \times [0,0] \times [0,0].
$$ 
An example of a construction choice in $\CC_n(S)$ is $42001000$, we
shall use this as a running example throughout the remainder of this
section.

\begin{proposition}\label{kappa-perm}
  For fixed $n$, let $S = \{s_1, \dots, s_k\}$ and $1 \leq s_1< \dots <
  s_k < n$ be given. Then $\bkappa_n(S)$ is in bijection with $\CC_n(S)$.

  %% , and therefore also with the set of ballots
  %% $$\bigl\{\, B_1 \dots B_{k+1} \in \Bal[n]
  %% : \{\min B_1, \dots,\min B_{k+1} \} = \{1,s_1+1,\dots,
  %% s_k+1\} \,\bigr\}.
  %% $$
\end{proposition}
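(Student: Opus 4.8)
\section*{Proof proposal}

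The plan is to realize the bijection through an \emph{insertion procedure} that builds a permutation one value at a time. First I would reformulate the defining condition: $\pi = a_1\dots a_n$ satisfies $A(\pi)\subseteq S$ if and only if every value $u\notin S$ that is not the last letter of $\pi$ is immediately followed by a smaller value (i.e.\ $u$ is a descent bottom or the final letter, never an ascent bottom). With this in hand I would construct $\pi$ by inserting the values $1,2,\dots,n$ in increasing order, each into one of the available gaps of the current word. The governing rule I would prove is that, when inserting the new maximum $v$ into the word on $\{1,\dots,v-1\}$, the gaps whose use is compatible with eventually achieving $A(\pi)\subseteq S$ are exactly: the front gap, together with each gap lying immediately after an element of $S$. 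Since the current word contains precisely $\{1,\dots,v-1\}$, the number of such valid gaps is $1+|S\cap[1,v-1]|$.

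The key lemma, and the main obstacle, is justifying this rule, which requires tracking how adjacencies evolve under repeated insertion. The crucial observation is that in increasing insertion every later-inserted element is larger, so once an element $u$ acquires a successor exceeding $u$ it keeps one permanently; hence inserting $v$ immediately after an element $u<v$ makes $u$ an ascent bottom of the \emph{final} permutation, which is permissible only if $u\in S$. This gives the backward direction: a word assembled using only valid gaps has all its ascent bottoms in $S$, because each ascent bottom's final (larger) successor must have been inserted directly behind it, forcing membership in $S$. For the forward direction I would use the standard subword argument: given $\pi$ with $A(\pi)\subseteq S$, reconstruct the insertion history by deleting $n,n-1,\dots$ in turn; if $u$ is the left neighbour of $v$ in the subword of $\pi$ induced by $\{1,\dots,v\}$, then in $\pi$ all letters strictly between $u$ and $v$ exceed $v$, so $u$ is immediately followed in $\pi$ by a value larger than $u$, making $u$ an ascent bottom and thus $u\in S$; and if $v$ is leftmost in that subword the front gap is used. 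Consequently the reconstruction never selects an invalid gap, and the insertion/deletion procedures are mutually inverse, so $\bkappa_n(S)$ is in bijection with the sequences of valid gap choices.

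Finally I would encode the choice made for value $v$ as an integer in $[0,\,|S\cap[1,v-1]|]$, say $0$ for the front gap and $j$ for the gap immediately after the $j$-th smallest element of $S\cap[1,v-1]$. Running over $v=1,\dots,n$, the value $v$ lies in $(s_j,s_{j+1}]$ exactly when $|S\cap[1,v-1]|=j$, so its coordinate ranges over $[0,j]$, and there are $s_{j+1}-s_j$ such values. Reading the coordinates in order of decreasing value (which matches the left-to-right layout $[0,k]^{s_{k+1}-s_k}\times\dots\times[0,0]^{s_1-s_0}$ of the Cartesian product) shows that these sequences are precisely the construction choices of $\CC_n(S)$. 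This yields the desired bijection; as a byproduct the number of valid choices multiplies to $\prod_{j=0}^{k}(j+1)^{s_{j+1}-s_j}$, giving $\kappa_n(S)=|\CC_n(S)|$.
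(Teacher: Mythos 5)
Your proposal is correct and follows essentially the same route as the paper: both build the permutation by inserting $1,2,\dots,n$ in increasing order into ``active sites'' consisting of the front gap (labeled $0$) and the gap immediately to the right of each already-inserted element of $S$ (the gap after the $i$-th smallest such element labeled $i$), and both match the resulting site-choice sequences with $\CC_n(S)$ in the same way. The only difference is that you spell out the inverse direction (deleting $n,n-1,\dots$ and checking that only valid gaps are ever used), which the paper dismisses as ``easy to see.''
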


\begin{proof}
  Take a construction choice $c_1 c_2 \dots c_n \in \CC_n(S)$.  We
  will use this to construct a permutation by insertion of entries at
  active sites.  Start with the empty permutation. This has a single
  active site, labeled zero. Reading the construction choice in
  reverse order, insert elements into the permutation beginning with
  the minimal element. That is, $c_i$ is the choice of active site for
  the insertion of $n+1-i$ into the permutation.

  A new active site is created when an element of $S$ is introduced into
  the permutation. The active sites are labeled according to the order
  in which they are inserted. That is, assuming entries of $S$ are
  numerically ordered then the active site to the right of $s_i$ in the
  permutation is labeled $i$. Note that a consequence of this is that
  $s_i$ is an ascent bottom if and only if $i$ is contained within the
  construction choice.  As a larger element is inserted at each step
  this ensures that the only place where an ascent can take place is
  after an entry of in the permutation which is contained within
  $S$. Therefore only elements of $S$ can be ascent bottoms.

  It is easy to see how to reverse this procedure and thus it provides
  the claimed bijection.
\end{proof}

\begin{example}\label{example-perm-from-construction-choice}

  For $n=8$ and $S=\{3,5,6,7\}$ the construction process for the
  permutation with construction choice $42001000$ is as follows. Note
  the new active site created when an element of $S$ is inserted.
  \begin{align*}
    &\,_0                  &\\
    &\,_01                 & \text{Insert 1 at site 0}\\
    &\,_021                & \text{Insert 2 at site 0}\\
    &\,_03_121             & \text{Insert 3 at site 0, contained in $S$}\\
    &\,_03_1421            & \text{Insert 4 at site 1}\\
    &\,_05_23_1421         & \text{Insert 5 at site 0, contained in $S$}\\
    &\,_06_35_23_1421      & \text{Insert 6 at site 0, contained in $S$}\\
    &\,_06_35_27_43_1421   & \text{Insert 7 at site 2, contained in $S$}\\
    &\,_06_35_27_483_1421 \quad \quad \quad & \text{Insert 8 at site 4}\\
  \end{align*}
  So the resulting permutation is $\pi = 65783421$, with $A(\pi)
  =\{3,5,7\}$.
\end{example}

\begin{corollary}\label{kappa-formula}
  For fixed $n$, let $S= \{ s_1, \dots, s_k \}$ with $1\leq s_1< \dots
  <s_k<n$ be given. Then
  $$\kappa_n(S) = \prod_{r=1}^{k+1} r^{s_r - s_{r-1}},
  $$
  where $s_0 = 0$ and $s_{k+1} = n$.
\end{corollary}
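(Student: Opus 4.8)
The plan is to appeal directly to Proposition~\ref{kappa-perm}, which furnishes a bijection between $\bkappa_n(S)$ and the set $\CC_n(S)$ of construction choices. Since a bijection preserves cardinality, we immediately get $\kappa_n(S) = |\CC_n(S)|$, and the entire task reduces to computing the size of a Cartesian product.

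Recall that $\CC_n(S)$ is defined as
$$\CC_n(S) = [0,k]^{s_{k+1}-s_k} \times \dots \times [0,1]^{s_2-s_1} \times [0,0]^{s_1-s_0},$$
and the cardinality of a Cartesian product is the product of the cardinalities of its factors. I would index the factors by $r \in [1,k+1]$, reading from right to left, so that the $r$th factor is $[0,r-1]^{s_r - s_{r-1}}$. Since the interval $[0,r-1]$ consists of the $r$ integers $0,1,\dots,r-1$, each copy of this factor contributes $r$ to the product, and there are $s_r - s_{r-1}$ such copies, so this block contributes $r^{s_r - s_{r-1}}$.

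Multiplying over all $r$ from $1$ to $k+1$ then yields
$$\kappa_n(S) = |\CC_n(S)| = \prod_{r=1}^{k+1} r^{s_r - s_{r-1}},$$
as claimed. There is no genuine obstacle in this argument once Proposition~\ref{kappa-perm} is in hand; the only point demanding a moment of care is the bookkeeping of the index $r$, ensuring that the base of each power is the size $r$ of the interval $[0,r-1]$ rather than its largest element $r-1$. (As a sanity check, the running example $n=8$, $S=\{3,5,6,7\}$ gives $1^3\cdot 2^2\cdot 3\cdot 4\cdot 5 = 240$, matching the factor sizes $5,4,3,2,2,1,1,1$ of the explicitly expanded $\CC_n(S)$.)
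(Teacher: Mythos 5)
Your proof is correct and is essentially identical to the paper's: both invoke Proposition~\ref{kappa-perm} to reduce $\kappa_n(S)$ to $|\CC_n(S)|$ and then read off the product formula from the Cartesian-product structure. You merely spell out the indexing bookkeeping that the paper leaves implicit.
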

\begin{proof}
  By Proposition~\ref{kappa-perm} we have that $\kappa_n(S)$ is the
  cardinality of $\CC_n(S)$, from which the formula immediately follows.
\end{proof}

We shall now show that construction choices in $\CC_n(S)$, and thus
permutations in $\bkappa_n(S)$, are in bijection with two different
sets of inversion tables. Namely
$$\bigl\{ \invtab \in \ITn : \dent(\invtab) \subseteq \{0,s_1,s_2,\dots,
  s_k \} \bigr\}
$$
and
$$\bigl\{ \invtab \in \ITn : [n-1] \setminus \dent(\invtab) \subseteq
  \{n-s_1,\dots, n- s_k \} \bigr\}.
$$

\begin{proposition}\label{kappa-inversion-table-alternative}
  For fixed $n$, let $S= \{ s_1, \dots, s_k \}$ with $1\leq s_1< \dots
  <s_k<n$ be given. Then there is a bijection between $\bkappa_n(S)$ and
  inversion tables whose entries are a subset of $\{ 0 \} \cup S$,
  $$\bigl\{ \invtab \in \ITn : \dent(\invtab) \subseteq \{0,s_1,s_2,\dots,
  s_k \} \bigr\}.
  $$
\end{proposition}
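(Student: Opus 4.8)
The plan is to compose the bijection of Proposition~\ref{kappa-perm}, which identifies $\bkappa_n(S)$ with $\CC_n(S)$, with a componentwise bijection between $\CC_n(S)$ and the stated set of inversion tables. The key observation is that both structures are products over the $n$ positions, and that the factor at each position has the same size on the two sides; the whole map will then reduce to a single relabeling rule applied coordinate by coordinate.

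First I would record the position ranges of a construction choice. Reading the definition of $\CC_n(S)$ from left to right, the factor $[0,m]^{s_{m+1}-s_m}$ occupies the positions $i$ with $n - s_{m+1} < i \le n - s_m$, so $c_i \in [0,m]$ precisely for such $i$. On the inversion-table side, the entry at position $i$ must satisfy $b_i \in [0,n-i]$ by the definition of $\ITn$, and we additionally require $b_i \in \{0, s_1, \dots, s_k\}$. For $i$ in the range $n - s_{m+1} < i \le n - s_m$ one has $n - i < s_{m+1}$, so the admissible values of $b_i$ are exactly $\{s_0, s_1, \dots, s_m\}$ (writing $s_0 = 0$), again $m+1$ of them.

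The bijection is then defined componentwise by $\Phi(c_1 \dots c_n) = b_1 \dots b_n$ with $b_i = s_{c_i}$. I would check that $\Phi(c)$ is a genuine inversion table: since $c_i \le m$ on the range above, $b_i = s_{c_i} \le s_m \le n-i$, so indeed $b_i \in [0,n-i]$; moreover every $b_i$ lies in $\{0,s_1,\dots,s_k\}$, whence $\dent(\Phi(c)) \subseteq \{0,s_1,\dots,s_k\}$. For the inverse, given $\invtab = b_1 \dots b_n$ with $\dent(\invtab) \subseteq \{0,s_1,\dots,s_k\}$, set $c_i = j$ where $b_i = s_j$; the inversion-table constraint $b_i \le n-i < s_{m+1}$ forces $j \le m$, so $c_i \in [0,m]$ and the preimage lies in $\CC_n(S)$. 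These two maps are mutually inverse by construction, which gives the claim.

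The computations are routine; the only point demanding care is the index bookkeeping, namely lining up the range $n - s_{m+1} < i \le n - s_m$ on which the construction-choice factor is $[0,m]$ with the range on which the constraint $b_i \le n-i$ cuts the admissible inversion-table entries down to exactly $\{s_0,\dots,s_m\}$. Once this alignment is verified the fibers match in size position by position and the single rule $c_i \mapsto s_{c_i}$ does everything. As a sanity check this recovers $\kappa_n(S) = \prod_{r=1}^{k+1} r^{s_r - s_{r-1}}$ from Corollary~\ref{kappa-formula}, since the range $n - s_{m+1} < i \le n - s_m$ contains $s_{m+1}-s_m$ positions, each offering $m+1$ choices.
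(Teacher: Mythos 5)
Your proof is correct and is essentially the paper's own argument: the paper also composes Proposition~\ref{kappa-perm} with the componentwise rule that sends $c_i$ to the entry $s_{c_i}$ (phrased diagrammatically as placing the row-$i$ dot in the allowed column labeled $c_i$, the allowed columns being labeled $0,\dots,k$ right to left). Your explicit verification that the range $n-s_{m+1}<i\le n-s_m$ aligns the factor $[0,m]$ with the admissible entries $\{s_0,\dots,s_m\}$ is the same bookkeeping the paper does, just written out more carefully.
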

\begin{proof}
  Again we shall use the construction choice.  Entries contained within
  the inversion table are a subset of $S$. Therefore elements which are
  in $[n-1]$ but \emph{not} in $S$, that is, elements of \ $[n-1]
  \setminus S$, cannot be contained in the inversion table. These
  entries are therefore forbidden. Label the remaining possible entries
  right to left from $[0,k]$. In this context it is convenient to use
  our diagrammatic representation of an inversion table. As an example,
  let $n=8$ and $S=\{ 3,5,6,7\}$. As $[n-1] \setminus S = \{ 1, 2, 4\}$,
  the columns $8-1$, $8-2$, and $8-4$ are forbidden (dark,
  below). Labeling those which remain right-to-left with $[0,4]$ yields
  \vspace{-2ex}
  \begin{center}
    \begin{tikzpicture}[line width=0.8pt, scale=0.38]
      \tikzstyle{disc} = [circle,thin,draw=black]
        %banned
        \foreach \x in {1,2,4}{
        \fill[black!40!white] (7-\x,8) rectangle (8-\x,\x);}
        %outline
        \invTabOutline{8}{1,2,3,4,5,6,7}
        %column labels
        \foreach \x/\y in {0/0,3/1,5/2,6/3,7/4}{
          \node[] at (7.5-\x,8.5) {\small{\y}};}
        %Row labels
        \foreach \x in {1,2,3,4,5,6,7,8}{
          \node[] at (8.5,8.5-\x) {\small{\x}};}
    \end{tikzpicture}.
  \end{center}
  \vspace{-1ex}
  Given a construction choice $c_1 c_2 \dots c_n\in \CC_n(S)$, assign
  the entry on row $i$ to be in the column labeled $c_i$. Note that as a
  consequence $s_i$ is contained in the inversion table if and only if
  $i$ is contained within the construction choice.  To consider the
  range of construction choices which are valid, we also note that there
  are $k+1$ allowed columns for the first $s_k-s_{k-1}$ rows, $k$
  choices for the next $s_{k-1}-s_{k-2}$ rows, and so on. This agrees
  with the definition of $\CC_n(n)$. Taking our example construction
  choice of $42001000$ yields the inversion table $\invtab = 75003000$
  where $\dent(\invtab) = \{0,3,5,7\}$:
  \vspace{-1.5ex}
  \begin{center}
    \begin{tikzpicture}[line width=0.8pt, scale=0.38]
      \tikzstyle{disc} = [circle,thin,draw=black]
        %outline
        \invTabOutline{8}{1,2,3,4,5,6,7}
        % filling
        \foreach \x [count=\i] in {7,5,0,0,3,0,0,0}{
        \draw[fill=black] (7.5 - \x,8.5 - \i) circle (5pt);}
    \end{tikzpicture}.
   \end{center}
  \vspace{-1ex}
\end{proof}

Applying the sieve principle to the set of inversion tables from
Proposition~\ref{kappa-inversion-table-alternative} we arrive at the
following result.

\begin{corollary}\label{lambda-inversion-table-alternative}
  There is a bijection between $\blambda_n(S)$ and inversion tables
  whose entries are exactly those in $\{0 \} \cup S$,
   $$\bigl\{ \invtab \in \ITn : \dent(\invtab) = \{0,s_1,\dots,
   s_k \} \bigr\}.
   $$
\end{corollary}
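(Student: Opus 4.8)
The plan is to observe that both bijections already in hand factor through construction choices, and that the defining condition of $\blambda_n(S)$ and the defining condition of the target set of inversion tables correspond to the \emph{same} condition on a construction choice; the desired bijection is then simply the restriction of the composite of the two bijections.

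First I would assemble that composite. Proposition~\ref{kappa-perm} gives a bijection $\bkappa_n(S) \to \CC_n(S)$, and Proposition~\ref{kappa-inversion-table-alternative} gives a bijection $\CC_n(S) \to \{\invtab \in \ITn : \dent(\invtab) \subseteq \{0\}\cup S\}$. Composing them through the common intermediate $\CC_n(S)$ yields a bijection $\Phi$ from $\bkappa_n(S)$ to the inversion tables with $\dent(\invtab)\subseteq \{0\}\cup S$.

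The key step is to track ascent bottoms and distinct entries through $\Phi$. The proof of Proposition~\ref{kappa-perm} records that, for the permutation $\pi$ built from a construction choice $c$, the index $s_i$ is an ascent bottom of $\pi$ if and only if the label $i$ occurs in $c$. The proof of Proposition~\ref{kappa-inversion-table-alternative} records the parallel fact that $s_i$ lies in $\dent(\invtab)$ if and only if $i$ occurs in $c$. Hence for the pair $(\pi,\invtab)$ matched by $\Phi$ we have
$$A(\pi) = \{\, s_i : i \text{ occurs in } c \,\} = \dent(\invtab)\setminus\{0\}.$$
Here the entry $0$ is always present in $\dent(\invtab)$, since the last entry of any inversion table is forced to be $0$, and on the permutation side it corresponds to the base active site, which is never an ascent bottom. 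In particular $A(\pi) = S$ holds exactly when $\dent(\invtab) = \{0\}\cup S$, so $\Phi$ restricts to a bijection between $\blambda_n(S) = \{\pi : A(\pi)=S\}$ and $\{\invtab \in \ITn : \dent(\invtab) = \{0,s_1,\dots,s_k\}\}$, which is the claim.

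I do not expect a serious obstacle: the argument is bookkeeping layered on the two preceding propositions, and the only point demanding care is the handling of the element $0$, which sits in every $\dent(\invtab)$ and matches no ascent bottom. As a consistency check one may instead argue by the sieve principle, as the corollary's preamble suggests: writing $m_n(T)$ for the number of inversion tables with $\dent = \{0\}\cup T$, Proposition~\ref{kappa-inversion-table-alternative} gives $\kappa_n(S) = \sum_{T\subseteq S} m_n(T)$, which together with $\kappa_n(S) = \sum_{T\subseteq S}\lambda_n(T)$ forces $\lambda_n(S) = m_n(S)$ by M\"obius inversion over the subset lattice. This recovers the equinumerosity, but the restriction argument above has the advantage of producing the bijection explicitly.
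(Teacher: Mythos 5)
Your proposal is correct, and your primary argument is a genuine refinement of what the paper does. The paper's entire proof is the one-line remark preceding the corollary: ``Applying the sieve principle to the set of inversion tables from Proposition~\ref{kappa-inversion-table-alternative} we arrive at the following result'' --- i.e.\ exactly your consistency check, $\kappa_n(S)=\sum_{T\subseteq S}m_n(T)$ together with $\kappa_n(S)=\sum_{T\subseteq S}\lambda_n(T)$ and M\"obius inversion. Strictly speaking that argument only yields $\lambda_n(S)=m_n(S)$ as an equality of cardinalities, whereas the corollary asserts (and the paper later uses, in the proof of Proposition~\ref{kappa-inversion-table-original}, where a \emph{unique} inversion table is extracted from $\pi^c$) an actual bijection. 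Your first argument supplies exactly the missing content: both Proposition~\ref{kappa-perm} and Proposition~\ref{kappa-inversion-table-alternative} explicitly record that $s_i$ is an ascent bottom of $\pi$, respectively an entry of $\dent(\invtab)$, if and only if the label $i$ occurs in the common construction choice $c\in\CC_n(S)$, so the composite bijection $\Phi$ satisfies $A(\pi)=\dent(\invtab)\setminus\{0\}$ and restricts to the claimed bijection $\blambda_n(S)\to\bigl\{\invtab\in\ITn:\dent(\invtab)=\{0,s_1,\dots,s_k\}\bigr\}$. Your handling of the element $0$ is also right: the final entry of any inversion table is forced to be $0$ (and the final coordinate of any construction choice is $0$), while site $0$ on the permutation side never produces an ascent bottom because elements are inserted in increasing order at the front. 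In short, your route proves the statement as stated; the paper's route proves only the numerical consequence, and your bookkeeping through $\CC_n(S)$ is the natural way to upgrade it.
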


To prove the bijection between $\bkappa_n(S)$ and the second set of
inversion tables, consideration of a set of ballots is useful.  The
proof of Proposition~\ref{cc-ballot} below shows one way to make a
ballot in $\Bal[n]$ (short for $\Bal[[n]]$) from a given construction
choice.

\begin{proposition}\label{cc-ballot}
  For fixed $n$, let $S = \{s_1, \dots, s_k\}$ and $1 \leq s_1< \dots <
  s_k < n$ be given.  Then $\CC_n(S)$ is in bijection with the set of ballots
  $$\bigl\{\, B_1 \dots B_{k+1} \in \Bal[n]
  : \{\min B_1, \dots,\min B_{k+1} \} = \{1,s_1+1,\dots,
  s_k+1\} \,\bigr\}.
  $$
\end{proposition}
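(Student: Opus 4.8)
The plan is to construct the bijection by an insertion procedure analogous to the one in Proposition~\ref{kappa-perm}, but building up a ballot rather than a permutation. Write $M=\{1,s_1+1,\dots,s_k+1\}$ for the prescribed set of block minima; it has exactly $k+1$ elements, one in each interval $[s_{r-1}+1,s_r]$ for $r=1,\dots,k+1$, since $s_{r-1}+1$ is the unique element of $M$ in that interval (the $s_i$ being strictly increasing). Given a construction choice $c_1c_2\dots c_n\in\CC_n(S)$, recall that the coordinate $c_{n+1-v}$ associated with the value $v$ lies in $[0,r-1]$ whenever $s_{r-1}<v\leq s_r$. I would insert the values $1,2,\dots,n$ in increasing order into an initially empty ordered list of blocks: when $v=s_{r-1}+1\in M$ is inserted it opens a new singleton block $\{v\}$, and $c_{n+1-v}$ records the position, among the blocks present, at which this new block is placed; when $v\notin M$ is inserted it is appended to an existing block, and $c_{n+1-v}$ records which one.

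First I would check that the map is well defined, i.e.\ that the range of each coordinate matches the number of available choices at the corresponding step. The number of blocks present just before inserting $v$ equals the number of elements of $M$ strictly smaller than $v$. A short count gives $r-1$ blocks when $v=s_{r-1}+1\in M$, so that a new block may be placed in any of $r$ positions, and $r$ blocks when $s_{r-1}+1<v\leq s_r$, so that there are $r$ blocks to choose from. In both cases there are $r$ options, exactly matching the range $[0,r-1]$ of $c_{n+1-v}$. It is then immediate that each element of $M$ is a block minimum (it opens its own block and only larger values are ever added), that no other value is a block minimum, and hence that the output is a ballot with $k+1$ blocks whose set of minima is $M$, as required.

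To prove bijectivity I would exhibit the inverse by reversing the insertion: given such a ballot, delete the values $n,n-1,\dots,1$ in decreasing order. At each step the value $v$ being removed is the largest remaining, so if $v$ is the minimum of its block then that block is currently the singleton $\{v\}$ (every other original member of the block exceeds $v$ and has already been removed); we then record its position and delete it. Otherwise $v$ is a non-minimal element of some block, and we record the index of that block and remove $v$. The block count from the previous paragraph shows the recorded value lies in $[0,r-1]$, and that these deletions undo the insertions step by step, so the two maps are mutually inverse. As a consistency check, the cardinality $\prod_{r=1}^{k+1}r^{s_r-s_{r-1}}$ of $\CC_n(S)$ from Corollary~\ref{kappa-formula} equals the product over all insertion steps of the number $r$ of choices.

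The step I expect to be the main obstacle is the bookkeeping of the middle paragraph: determining precisely how many blocks are present at the moment each value is inserted and confirming this count equals the range of the corresponding coordinate. Everything else is routine once the observation that $s_{r-1}+1$ is the unique element of $M$ in $[s_{r-1}+1,s_r]$ is in hand, since this cleanly separates the two insertion cases and makes the block counts transparent.
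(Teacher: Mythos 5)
Your proof is correct and takes essentially the same approach as the paper's: both read the construction choice as a sequence of block choices in an insertion procedure, and in fact the two constructions yield the same bijection (the paper inserts values in \emph{decreasing} order into $k+1$ pre-allocated blocks that close upon receiving their prescribed minimum, while you insert in increasing order and open blocks dynamically). Your write-up is somewhat more explicit about counting the available choices at each step and about the inverse map, which the paper dismisses as ``easy to see how to reverse.''
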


\begin{proof}
  We will show how to construct a ballot from a given construction
  choice $c_1c_2\dots c_n$. Take $k+1$ empty blocks. At any point in the
  following construction each block will be considered either open or
  closed, and the open blocks will be numbered $0$, $1$, \dots, $k$,
  from left to right. Initially all blocks are open. For $i$ equal to
  $1$, $2$, \dots, $n$, in that order, let $a=n+1-i$ and insert $a$ into
  the $c_i$th open block. If $a\in \{1,s_1+1,\dots, s_k+1\}$ then also
  close the block $a$ is inserted into. This way $a$ is guaranteed end
  up as the minimal element of its block. It is easy to see how to
  reverse this procedure and thus it provides the claimed bijection.
\end{proof}

\begin{example}\label{example-ballot}
  For $n=8$ and $S=\{3,5,6,7\}$ consider the construction of a ballot
  whose minimal block elements are $\{1,4,6,7,8 \}$ with construction
  choice $42001000$. Initially we have $5$ empty blocks labeled from
  $[0,4]$. Note that when a minimal block element is inserted, that
  block is no longer open and the remaining blocks are relabeled.
 \begin{alignat*}{3}
    &\{\}_0 \{\}_1 \{\}_2 \{\}_3 \{\}_4\\
    %%%
    &\{\}_0 \{\}_1 \{\}_2 \{\}_3 \{8\}_4 \quad \quad &
    \text{$8$ inserted in block $4$, is minimal entry}\\
    %%%
    &\{\}_0 \{\}_1 \{7\}_2 \{\}_3 \{8\} \quad \quad &
    \text{$7$ inserted in block $2$, is minimal entry}\\
    %%%
    &\{6\}_0 \{\}_1 \{7\} \{\}_2 \{8\} \quad \quad &
    \text{$6$ inserted in block $0$, is minimal entry}\\
    %%%
    &\{6\} \{5\}_0 \{7\} \{\}_1 \{8\} \quad \quad &
    \text{$5$ inserted in block $0$}\\
    %%%
    &\{6\} \{5\}_0 \{7\} \{4\} \{8\} \quad \quad &
    \text{$4$ inserted in block $1$, is minimal entry}\\
    %%%
    &\{6\} \{3,5\}_0 \{7\} \{4\} \{8\} \quad \quad &
    \text{$3$ inserted in block $0$}\\
    %%%
    &\{6\} \{2,3,5\}_0 \{7\} \{4\} \{8\} \quad \quad &
    \text{$2$ inserted in block $0$}\\
    %%%
    &\{6\} \{1,2,3,5\}_0 \{7\} \{4\} \{8\} \quad \quad &
    \text{$1$ inserted in block $0$, is minimal entry}
 \end{alignat*}
 Therefore the final ballot is $\{6\} \{1,2,3,5\} \{7\} \{4\} \{ 8\}$.
\end{example}

\begin{proposition}\label{kappa-inversion-table-original}
  There is a bijection between $\bkappa_n(S)$ and inversion tables
  whose missing elements are a subset of $n-s_1$,
  $n-s_2$, \dots, $n-s_k$,
  $$\bigl\{ \invtab \in \ITn : [n-1] \setminus \dent(\invtab) \subseteq
  \{n-s_1,\dots, n- s_k \} \bigr\}.
  $$
  Or, equivalently,
  $$\bigl\{ \invtab \in \ITn :[0,n-1] \setminus \{n-s_1, \dots,
  n-s_k\} \subseteq \dent(\invtab) \bigr\}.
  $$
\end{proposition}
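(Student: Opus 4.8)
The plan is to prove the statement by chaining it onto the bijections already established, so that the only new ingredient is a single diagrammatic map from construction choices to this second family of inversion tables. By Proposition~\ref{kappa-perm} we have $\bkappa_n(S)\cong\CC_n(S)$, and by Proposition~\ref{cc-ballot} the construction choices in $\CC_n(S)$ are in bijection with the ballots $B_1\dots B_{k+1}\in\Bal[n]$ whose block minima form the set $\{1,s_1+1,\dots,s_k+1\}$. Thus it suffices to produce a bijection between these objects and the target set $\{\invtab\in\ITn:[n-1]\setminus\dent(\invtab)\subseteq\{n-s_1,\dots,n-s_k\}\}$, and I would do this by imitating the proof of Proposition~\ref{kappa-inversion-table-alternative} with the roles of ``allowed'' and ``forced'' columns interchanged.

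Concretely, I would again read a construction choice $c_1\dots c_n$ into the right-aligned diagram of an inversion table, placing one entry per row. In Proposition~\ref{kappa-inversion-table-alternative} the set $S$ marked the only columns (besides column $n$) in which an entry was permitted, forcing $\dent(\invtab)\subseteq\{0\}\cup S$. Here the dual requirement is that every column outside $S$ must receive at least one entry, while the columns indexed by $S$ are exactly those allowed to remain empty. The key numerical coincidence that makes this possible is that the $i$th coordinate of $\CC_n(S)$ ranges over $[0,\,|S\cap[1,n-i]|]$, which is precisely the number of degrees of freedom left after the forced covering entries are placed; equivalently, each nontrivial block minimum $s_i+1$ of the associated ballot corresponds under $m\mapsto n+1-m$ to the single value $n-s_i$ that is permitted to be missing from $\dent(\invtab)$.

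Having fixed the map, I would verify three things: that the image always lies in the target set (no column outside $S$ is skipped, so the missing entries are contained in $\{n-s_1,\dots,n-s_k\}$); that the two displayed descriptions of the target set agree, which is the elementary complementation $[n-1]\setminus\dent(\invtab)\subseteq\{n-s_i\}\iff[0,n-1]\setminus\{n-s_i\}\subseteq\dent(\invtab)$ together with the fact that $0\in\dent(\invtab)$ always; and that the procedure is reversible, reading the construction choice back off the diagram by scanning the columns in order and recording, at each entry, which of the currently available columns was chosen.

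The main obstacle, and where the bulk of the work lies, is the covering condition, because---unlike in Proposition~\ref{kappa-inversion-table-alternative}, where the constraint $\dent(\invtab)\subseteq\{0\}\cup S$ is imposed row by row and is therefore local---the requirement that \emph{every} column outside $S$ be hit is global. I would therefore have to describe the insertion so that the forced covering entries and the free entries interleave correctly as the column index grows past each $s_i$, and then argue that this interleaving is exactly what the ranges defining $\CC_n(S)$ encode. Getting the bookkeeping of ``which columns are still uncovered'' to match the coordinate ranges of $\CC_n(S)$ at every step is the delicate point; once it is in place, injectivity and surjectivity of the map onto the target set follow as in the previous propositions.
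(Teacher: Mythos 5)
Your reduction to $\CC_n(S)$ via Proposition~\ref{kappa-perm} is fine, and your check that the two displayed descriptions of the target set agree (using $0\in\dent(\invtab)$ always, since $b_n=0$) is correct. But the heart of your argument --- a direct row-by-row reading of a construction choice into the diagram, dual to Proposition~\ref{kappa-inversion-table-alternative} --- rests on the claim that the $i$th coordinate range $[0,\,|S\cap[1,n-i]|]$ of $\CC_n(S)$ equals ``the number of degrees of freedom left after the forced covering entries are placed'' in row $i$. That claim is false: because the covering condition is global, the number of admissible columns for row $i$ depends on what earlier rows did, and is not constant. Take $n=3$, $S=\{1\}$, so $\CC_3(S)=[0,1]\times[0,1]\times[0,0]$ and the target set is the four tables $\{010,\,110,\,100,\,210\}$ (those containing both $0$ and $1$). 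Row $1$ genuinely has three possible columns; if it takes the value $1$ then row $2$ has two choices, otherwise row $2$ is forced. The choice tree is $1+2+1=4$, not $2\times 2$: there is no product decomposition matching the coordinate ranges of $\CC_n(S)$, so no ``label the surviving columns and pick the $c_i$th one'' map can work. Any correct direct construction must be adaptive (the meaning of $c_i$ must depend on earlier choices, with availability expiring at prescribed times rather than when a column happens to get covered), and you explicitly defer defining exactly that mechanism --- which is where the entire difficulty of the proposition lives. As written, the proof is a plan for a proof, with the one nontrivial step left as an acknowledged obstacle and supported by an incorrect counting heuristic.

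For comparison, the paper never attacks the covering condition head-on. It routes through Proposition~\ref{cc-ballot}: the construction choice gives a ballot $B_1\dots B_{k+1}$ with block minima $\{1,s_1+1,\dots,s_k+1\}$ (where the open/closed-block device is precisely the adaptive bookkeeping you are missing --- a block is deactivated when its prescribed minimum arrives, not when some global condition is met); the ballot is turned into a permutation by writing blocks in decreasing order, so that ascent bottoms sit among the block minima; complementation converts ``ascent bottoms contained in a small set'' into ``ascent bottoms containing a large set''; and Corollary~\ref{lambda-inversion-table-alternative} then hands over the inversion table with $\dent(\invtab)=\{0\}\cup A(\tau^c)$. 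If you want to salvage your direct approach, you would essentially have to transplant the open/closed mechanism of Proposition~\ref{cc-ballot} into the inversion-table diagram, at which point you have reproduced the paper's argument in different clothing.
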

\begin{proof}
  As seen in the proof of Equation~\eqref{equation-alpha} from
  Section~\ref{terminology}, a ballot can be taken to a
  permutation by writing the entries within a block in decreasing
  order and concatenating the blocks. By this method only the minimal
  element in a block may be an ascent bottom in the permutation, with
  the exception of the final block whose minimal element is the last
  element in the permutation.

  Hence, for a fixed $n$ and $S$, the ballot construction gives a
  bijection between permutations whose set of ascent bottoms is a
  subset of $S$ and permutations whose set of ascent bottoms plus the
  last element is a subset of $\{1\} \cup \{s_1 +1, \dots, s_k + 1 \}$.
  Let $\pi= a_1 \dots a_n$ be any such permutation. We shall denote
  the set of ascent bottoms plus the final element of $\pi$ as
  $T=\{t_1, t_2, \dots, t_j \}$:
  $$A(\pi) \cup \{  a_n\} = T \subseteq \{1\} \cup \{s_1 +1, \dots, s_k + 1 \}.
  $$
  An element in a permutation can either be an ascent bottom, a
  descent top, or the final element. Taking the complement of a
  permutation takes an ascent bottom $t_i$ to a descent top
  $n+1-t_i$. Letting $\pi^c$ denote the complement of $\pi$, it
  follows that for $\pi^c$ the set of descent tops and final element
  is
  $$\{n+1-t_1, n+1-t_2, \dots, n+1-t_j\} \subseteq \{n\} \cup \{
  n-s_1, \dots, n- s_k \},
  $$
  which contains at least the element $n$. The set of ascent bottoms in
  $\pi^c$ contains everything which is not a descent top or the final
  element.
  $$A(\pi^c) =[n] \setminus  \{n+1-t_1, \dots, n+1-t_j \}.
  $$
  As  $T \subseteq \{1\} \cup \{s_1 +1, \dots, s_k + 1 \}$, it
  follows that
  $$[n-1] \setminus \{n-s_1, \dots, n-s_k\} \subseteq A (\pi^c).
  $$
  From Corollary~\ref{lambda-inversion-table-alternative} we have that
  $\pi^c$ corresponds to an inversion table whose entries are
  exactly those in $\{0 \} \cup A(\pi^c)$, thus giving a unique
  inversion table satisfying
  $$[0,n-1] \setminus \{n-s_1, \dots, n-s_k\}
  \subseteq \dent(\invtab).
  $$
  This concludes the proof.
\end{proof}

\begin{example}
  As in previous examples, let $n=8$, $S=\{3,5,6,7 \}$ and consider the
  construction choice $42001000$. From Example~\ref{example-perm-from-construction-choice}
  the permutation in $\bkappa_n(S)$ that is
  given by the construction choice is $\pi = 65783421$. We wish to find
  the inversion table $\invtab$ corresponding to $\pi$ satisfying
  $$[0,7] \setminus \{ 8-3,8-5,8-6,8-7 \} = \{0,4,6,7\}
  \subseteq \dent(\invtab).
  $$
  From Example~\ref{example-ballot} the ballot given by the construction
  choice is $\{6\}\{1,2,3,5\}\{7\}\{4\}\{8\}$.  Writing the elements
  within a block in decreasing order and concatenating the blocks gives
  the permutation $\tau = 65321748$ with set of ascent bottoms $\{1,4\}$
  and final element $\{8\}$ where
  $$\{1,4,8\} \subset \{1, s_1+1, \dots, s_k+1 \} =
  \{1, 3+1, 5+1, 6+1, 7+1\}.
  $$
  The complement of $\tau$ is $\tau^c = 34678251 $ and has set of
  descent tops $\{9-4, 9-1\} = \{5,8\}$ and final element
  $9-8=1$. Every other entry in $\tau^c$ is an ascent bottom:
  $$A(\tau^c) = \{2,3,4,6,7\}.
  $$
  Taking $S'= A(\tau^c)$, it follows from
  Proposition~\ref{kappa-perm} that the construction choice
  uniquely specifying $\tau^c \in \bkappa_n(S')$ is
  $54312000$. Applying
  Proposition~\ref{kappa-inversion-table-alternative} and
  Corollary~\ref{lambda-inversion-table-alternative}, we can show
  that $\tau^c$ corresponds to the inversion table $76423000$,
  which, by construction, has set of distinct entries
  $$\dent(76423000) = \{ 0,2,3,4,6,7\} = \{0\} \cup A(\tau^c).
  $$
  Thus we have constructed $\invtab$ satisfying
  $\{0,4,6,7\} \subseteq  \{ 0,2,3,4,6,7\} = \dent(\invtab)$.
\end{example}

%% From Proposition~\ref{kappa-inversion-table-original} and the
%% sieve principle we get the following result.

%% \begin{corollary}\label{lambda-inversion-table-original}
%%   There is a bijection between $\blambda_n(S)$ and inversion tables
%%   whose missing elements are exactly $n-s_1$, $n-s_2$, \dots, $n-s_k$,
%%   $$\{ \invtab \in \ITn : [n-1] \setminus \dent(\invtab) =
%%   \{n-s_1,\dots, n- s_k \}  \}.
%%   $$
%% \end{corollary}

%%%%%%%%%%%%%%%%%%%%%%%%%%%%%%%%%%%%%%%%%%%%%%%%%%%%%%%%%%%%%%%%%%%%%%%%%%%%

\section{Decomposition of fixed points}\label{decomp}

Recall that matrices fixed under the involution $\eta$
satisfy the properties

\begin{enumerate}
\item
  there is exactly one element per row;
\item
  if $a<b$, with $a$ on row $i$, and $b$ on row $i+1$, then column $i$
  is non-empty.
\end{enumerate}

Also recall that a fixed point matrix can be viewed as a pair
consisting of a permutation and an inversion table.

For $A\in \BalMat^\eta[U]$ where $n=|U|$, let $\pi(A) = a_1 \dots a_n$
be the permutation defined by setting $a_i$ the value held in the unique
nonzero element of row $i$ of $A$. Let an equivalence relation
$\bkequiv$ on $\BalMat^\eta[U]$ be defined by $A \bkequiv B$ if $\pi(A)
= \pi(B)$.

\begin{proposition}\label{bk-equiv-class-size}
  For $\pi \in \sym_n$, the equivalence class $[\pi]_\bkequiv$ is
  determined by the descent set $S = \{s_1, s_2, \dots, s_n \}=
  D(\pi)$ of $\pi$ alone.  In fact, fixed point matrices in
  $[\pi]_\bkequiv$ can be viewed as pairs consisting of the
  permutation $\pi$ and an inversion table whose set of missing
  entries is a subset of $\{n-s_1, n-s_2, \dots, n-s_k \}$.
\end{proposition}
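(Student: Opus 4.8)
The plan is to translate the two defining conditions of a fixed point of $\eta$ into conditions on the associated inversion table, and then to observe that the resulting condition refers to $\pi$ only through its descent set. Recall that a fixed point matrix $A$ decomposes as a pair $(\pi,\invtab)$, where $\pi = \pi(A) = a_1\dots a_n$ records the value on each row and $\invtab = b_1\dots b_n$ records columns through $b_i = n - c_i$, with $c_i$ the column of the unique entry on row $i$. By definition of $\bkequiv$, every matrix in $[\pi]_\bkequiv$ shares the same permutation part $\pi$, so such a matrix is completely determined by its inversion table. It therefore suffices to pin down exactly which inversion tables can occur with a given $\pi$, and to check that this set depends only on $S = D(\pi)$.

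First I would set up the dictionary between columns and the statistic $\dent$. Since each row carries exactly one entry, the nonzero entries sit precisely at the positions $(i,c_i)$; hence column $j$ is non-empty if and only if $c_i = j$ for some $i$, that is, if and only if $n - j \in \dent(\invtab)$. Equivalently, column $j$ is empty exactly when $n - j$ is a missing entry of $\invtab$.

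Next I would reinterpret the second fixed-point condition: whenever the value on row $i$ is smaller than the value on row $i+1$ --- that is, whenever position $i$ is an ascent of $\pi$, so $i \notin D(\pi)$ --- column $i$ must be non-empty. Taking the contrapositive and feeding in the dictionary above, an empty column $i$ (equivalently, $n - i$ missing from $\invtab$) forces $i$ to be a descent, $i \in S$. Reindexing by $m = n - i$, a value $m \in [n-1]$ can be missing from $\invtab$ only if $n - m \in S$, i.e.\ only if $m \in \{n - s_1, \dots, n - s_k\}$. Conversely, any inversion table whose missing entries all lie in $\{n - s_1, \dots, n - s_k\}$ yields, together with $\pi$, a matrix meeting this condition, while the first fixed-point condition (one entry per row) holds automatically for any inversion table. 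This identifies the fixed point matrices in $[\pi]_\bkequiv$ with the pairs $(\pi,\invtab)$ whose set of missing entries is contained in $\{n - s_1, \dots, n - s_k\}$.

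Finally, this description involves $\pi$ only via $S = D(\pi)$, which gives the first assertion that $[\pi]_\bkequiv$ is determined by the descent set alone. The step I expect to demand the most care is the passage between the geometric condition ``column $i$ is empty'' and the arithmetic condition ``$n - i$ is missing from $\invtab$'', and in particular verifying \emph{both} directions of the correspondence --- that every admissible inversion table genuinely produces a fixed point, and not merely that every fixed point produces an admissible table. The remainder is routine bookkeeping with the relation $c_i = n - b_i$.
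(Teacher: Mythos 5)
Your proof is correct and follows essentially the same route as the paper's: translate the second fixed-point condition (ascent at position $i$ forces column $i$ non-empty) into the statement that $n-i$ must appear in the inversion table, so that only the values $n-s_j$ with $s_j\in D(\pi)$ may be missing. You are in fact somewhat more careful than the paper in stating the column/$\dent$ dictionary explicitly and in checking the converse direction (that every admissible inversion table really yields a fixed point), which the paper leaves implicit in its ``exactly those'' phrasing.
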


\begin{proof}
  It is a defining property of a fixed point matrix that if $a<b$, with
  $a$ on row $i$, and $b$ on row $i+1$, then column $i$ is required to
  be non-empty. This is equivalent to saying that when the matrix is
  decomposed into a permutation and inversion table, that $n-i$ is an
  entry contained within the inversion table.

  So, if $a>b$ then we have a descent in the associated permutation and
  therefore column $i$ may or may not be empty. It follows that $n-i$
  may or may not be contained in the inversion table.

  Therefore, for $\pi\in\sym_n$, if the set of descent positions is
  $D(\pi) = S=\{s_1, s_2, \dots, s_k \}$, then the set of inversion
  tables with which $\pi$ can be paired are exactly those where the set
  of missing entries is a subset of $\{n-s_1, n-s_2, \dots, n-s_k\}$.
\end{proof}

\begin{theorem}\label{theorem-equiv-bk}
    Labeled interval orders on $[n]$ are in bijection with the set
    $$\sum_{S\subseteq [n-1]} \bbeta_n(S) \times \bkappa_n(S).
    $$
    This set may be alternatively written as
    $$ \{ (\pi, \tau) \in \sym_n \times \sym_n  : A(\tau) \subseteq
       D(\pi) \}.
    $$
\end{theorem}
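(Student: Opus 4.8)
The plan is to assemble the bijections already established into a single chain. By Proposition~\ref{uniqueness-of-fixed-point} the labeled interval orders on $[n]$ are in bijection with $\BalMat^\eta[n]$, the fixed points of $\eta$, with exactly one per $\bkequiv$-equivalence class. Each such fixed point decomposes, as described in Section~\ref{the-fixed-points}, into a permutation $\pi$ together with an inversion table, and this decomposition is reversible since the matrix is determined by which element sits in which row and column. So it suffices to produce a bijection between $\BalMat^\eta[n]$ and $\sum_{S\subseteq[n-1]}\bbeta_n(S)\times\bkappa_n(S)$ that is compatible with this decomposition.

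First I would partition $\BalMat^\eta[n]$ according to the permutation $\pi(A)$; since $A\bkequiv B$ means precisely $\pi(A)=\pi(B)$, this recovers exactly the classes $[\pi]_\bkequiv$. Fixing a permutation $\pi$ with descent set $S=D(\pi)$, Proposition~\ref{bk-equiv-class-size} identifies $[\pi]_\bkequiv$ with the set of inversion tables whose missing entries form a subset of $\{n-s_1,\dots,n-s_k\}$. This is exactly the set of inversion tables appearing in Proposition~\ref{kappa-inversion-table-original}, which places it in bijection with $\bkappa_n(S)$. Composing, the fixed points whose underlying permutation is $\pi$ correspond bijectively to $\{\pi\}\times\bkappa_n(D(\pi))$.

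Summing over all $\pi\in\sym_n$ and grouping the permutations by their descent set then yields
$$\sum_{\pi\in\sym_n}\{\pi\}\times\bkappa_n(D(\pi))
   =\sum_{S\subseteq[n-1]}\bbeta_n(S)\times\bkappa_n(S),$$
because $\bbeta_n(S)$ is by definition the set of permutations with descent set exactly $S$. This gives the first displayed set. For the alternative description, observe that a pair $(\pi,\tau)$ lies in $\sum_{S}\bbeta_n(S)\times\bkappa_n(S)$ iff there is some $S$ with $D(\pi)=S$ and $A(\tau)\subseteq S$; since $D(\pi)$ determines $S$ uniquely, this is equivalent to the single condition $A(\tau)\subseteq D(\pi)$, producing the rewriting $\{(\pi,\tau)\in\sym_n\times\sym_n:A(\tau)\subseteq D(\pi)\}$.

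The substance of the argument lives entirely in the earlier propositions, so no step is computationally heavy. The one point that needs care — and where I expect the only genuine obstacle to lie — is confirming that the inversion-table condition emerging from Proposition~\ref{bk-equiv-class-size} (missing entries contained in $\{n-s_1,\dots,n-s_k\}$) is literally the same condition defining the domain of the bijection in Proposition~\ref{kappa-inversion-table-original}. Once this alignment is verified, the two results glue together and the descent-set grouping completes the proof without further work.
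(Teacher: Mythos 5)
Your proposal is correct and follows essentially the same route as the paper: the bijection with fixed points via Proposition~\ref{uniqueness-of-fixed-point}, the decomposition into a permutation and an inversion table, and then Propositions~\ref{bk-equiv-class-size} and~\ref{kappa-inversion-table-original} to identify each $\bkequiv$-class with $\bkappa_n(D(\pi))$; the two inversion-table conditions you flag do match verbatim. (One minor slip: Proposition~\ref{uniqueness-of-fixed-point} gives one fixed point per interval-order equivalence class, not per $\bkequiv$-class, but this does not affect the rest of your argument.)
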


\begin{proof}
  The adapted surjection of Dukes \etal is a bijection between labeled
  interval orders and fixed point ballot matrices. This is given by
  the equivalence class on ballot matrices according to interval order
  and Proposition~\ref{uniqueness-of-fixed-point} which shows that
  there is a unique fixed point per equivalence class.

  A fixed point matrix can be decomposed into a permutation $\pi$ and
  an inversion table. If $D(\pi) =\{s_1, s_2, \dots s_k\}$
  Proposition~\ref{bk-equiv-class-size} gives that the set of
  inversion tables with which $\pi$ can be paired are those whose set
  of missing elements is a subset of $\{n-s_1, n-s_2, \dots, n-s_k
  \}$.  We know from Proposition~\ref{kappa-inversion-table-original}
  that such inversion tables are in bijection with permutations in
  $\bkappa_n(D(\pi))$.
\end{proof}

\begin{corollary}\label{corollary-count-on-downset}
  The number of labeled interval orders on $[n]$ is
  given by the formula
  $$ \sum_{\{s_1,\dots,s_k\}\subseteq [n-1]} \left(
    \det\left[ \binom{n-s_i}{s_{j+1}-s_i} \right]\cdot
    \prod_{r=1}^{k+1} r^{s_r - s_{r-1}}
  \right),
  $$
  in which $s_0 = 0$ and $s_{k+1}=n$.
\end{corollary}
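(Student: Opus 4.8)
The plan is to obtain the formula purely by taking cardinalities in the bijection already established. Theorem~\ref{theorem-equiv-bk} identifies the labeled interval orders on $[n]$ with the disjoint union $\sum_{S\subseteq[n-1]} \bbeta_n(S)\times\bkappa_n(S)$, so the number of such orders is simply the size of this set. Since the union is over distinct descent sets $S$ and the two factors are independent, counting gives
$$\sum_{S\subseteq[n-1]} \beta_n(S)\cdot\kappa_n(S).$$
The entire task then reduces to substituting the two closed forms that have already been derived for $\beta_n(S)$ and $\kappa_n(S)$.

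For the first factor I would invoke the determinant expression recorded in Section~\ref{terminology}, namely $\beta_n(S)=\det\bigl[\binom{n-s_i}{s_{j+1}-s_i}\bigr]$ with $(i,j)\in[0,k]\times[0,k]$, which follows from the sieve principle applied to $\alpha_n(S)$. For the second factor I would use Corollary~\ref{kappa-formula}, which gives $\kappa_n(S)=\prod_{r=1}^{k+1} r^{s_r-s_{r-1}}$, itself a consequence of Proposition~\ref{kappa-perm} (the bijection between $\bkappa_n(S)$ and the construction choices $\CC_n(S)$) together with the trivial enumeration of that Cartesian product. Inserting both expressions into the sum above yields
$$\sum_{\{s_1,\dots,s_k\}\subseteq[n-1]} \left( \det\left[\binom{n-s_i}{s_{j+1}-s_i}\right]\cdot \prod_{r=1}^{k+1} r^{s_r-s_{r-1}} \right),$$
which is exactly the claimed formula.

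The only point requiring care—and the nearest thing to an obstacle—is bookkeeping of the boundary conventions: both cited results adopt the normalization $s_0=0$ and $s_{k+1}=n$, and I would note explicitly that these agree so that the determinant indices and the product range are consistent across the two substitutions, and that summing over subsets $S\subseteq[n-1]$ is the same as summing over their ordered listings $\{s_1,\dots,s_k\}$. There is no substantive analytic or combinatorial difficulty remaining; the corollary is a direct specialization of Theorem~\ref{theorem-equiv-bk} once the formulas of Section~\ref{terminology} and Corollary~\ref{kappa-formula} are plugged in.
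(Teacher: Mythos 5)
Your proposal is correct and follows essentially the same route as the paper: the published proof likewise deduces the formula from Theorem~\ref{theorem-equiv-bk} by substituting Stanley's determinant expression for $\beta_n(S)$ and the product formula for $\kappa_n(S)$ from Corollary~\ref{kappa-formula}. Your version simply spells out the cardinality bookkeeping that the paper leaves implicit.
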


\begin{proof}
  This follows from the formula for $\beta_n$, see
  Stanley~\cite[Example 2.2.4]{enumerative-combinatorics-one}, and
  the formula for $\kappa_n$ given by Corollary~\ref{kappa-formula}.
\end{proof}

In the above we have taken the permutation to be fixed and considered
the set of inversion tables in the equivalence class under
$\bkequiv$. It is equally natural to instead take the inversion table as
fixed.

As before, for $A\in \BalMat^\eta[U]$, let $\invtab(A) = b_1 b_2 \dots
b_n$ be the inversion table from the decomposition of a ballot matrix
fixed under $\eta$ defined by setting $b_i$ to $n-j$ where $j$ is the
column of the only non-empty ballot entry on row $i$ of $A$.

Let the equivalence relation $\alequiv$ on $\BalMat^\eta[U]$ be
defined by $A \alequiv B$ if $\invtab(A) = \invtab(B)$.

\begin{proposition}\label{al-equiv-class-size}
  For $\invtab \in \ITn$, the equivalence class $[\invtab]_\alequiv$
  is determined by $\dent(\invtab)$ alone.  In fact, fixed point
  matrices in $[\invtab]_\alequiv$ can be viewed as pairs consisting
  of the inversion table $\invtab$ and a permutation whose descent set
  is a subset of $\dent(\invtab) \setminus \{ 0 \}$.
\end{proposition}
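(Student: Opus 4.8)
The plan is to mirror the proof of Proposition~\ref{bk-equiv-class-size}, interchanging the roles of the permutation and the inversion table. The starting point is again the second defining property of a fixed point: if $a<b$ with $a$ on row $i$ and $b$ on row $i+1$, then column $i$ must be non-empty. Reading this through the decomposition, the key observation is that, since each row of a fixed-point matrix carries exactly one block, column $i$ is non-empty precisely when some row places its entry in column $i$, i.e.\ precisely when $n-i\in\dent(\invtab)$. Equivalently, column $i$ is empty exactly when $n-i$ is a missing entry of $\invtab(A)$.

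First I would fix $\invtab\in\ITn$ and ask which permutations $\pi$ can be paired with it so that the resulting upper-triangular, one-entry-per-row matrix is fixed under $\eta$. The defining property says that an ascent of $\pi$ at position $i$ forces column $i$ to be non-empty; taking the contrapositive, every empty column $i$ forces a descent at position $i$. Hence a pair $(\pi,\invtab)$ is a fixed point if and only if $\{\,i\in[n-1] : n-i\notin\dent(\invtab)\,\}\subseteq D(\pi)$. Because this forced-descent set depends only on the set of distinct entries $\dent(\invtab)$, and not on their multiplicities or order, the collection of permutations occurring in $[\invtab]_\alequiv$ depends on $\dent(\invtab)$ alone; this establishes the first assertion.

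It then remains to rewrite the constraint $\{\,i : n-i\notin\dent(\invtab)\,\}\subseteq D(\pi)$ into the stated shape $D\subseteq\dent(\invtab)\setminus\{0\}$. Writing $\mathrm{Miss}=[n-1]\setminus\dent(\invtab)$ for the missing entries, the forced-descent set is $\{\,n-m : m\in\mathrm{Miss}\,\}$, while $\dent(\invtab)\setminus\{0\}$ is exactly the complement $[n-1]\setminus\mathrm{Miss}$. The clean way to convert this lower bound on the descent set into an upper bound of the required form is to pass to the reverse permutation $\pi^{r}$, playing the same role the complement does in Proposition~\ref{kappa-inversion-table-original}: under $\pi\mapsto\pi^{r}$ a descent at position $i$ becomes an ascent at $n-i$, so $D(\pi^{r})=\{\,n-i : i\in[n-1]\setminus D(\pi)\,\}$, and a short reindexing shows that the forced-descent condition on $\pi$ is equivalent to $D(\pi^{r})\subseteq\dent(\invtab)\setminus\{0\}$. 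Since reversal is a bijection on $\sym_n$, the permutations in the class are in bijection with $\balpha_n(\dent(\invtab)\setminus\{0\})$, which is the description sought and dovetails with the pairing $D(\pi)\subseteq A(\tau)$.

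I expect the main obstacle to be bookkeeping the direction of the inclusion. The naive translation of the fixed-point condition yields a \emph{lower} bound on the descent set (empty columns force descents), whereas the statement asks for an \emph{upper} bound $D\subseteq\dent(\invtab)\setminus\{0\}$; reconciling the two requires the reversal step above together with careful tracking of the index shift $i\leftrightarrow n-i$ and the removal of $0$ from $\dent(\invtab)$. Getting this reindexing to interlock with the ballot and complement bijections of Section~\ref{ascent-bottom} is the crux, and the remaining verifications should be routine.
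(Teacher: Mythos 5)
Your proposal is correct and follows essentially the same route as the paper: both read off from the fixed-point condition that the non-empty columns are exactly those $i$ with $n-i\in\dent(\invtab)$, deduce that the ascent positions of the associated permutation must lie in $\{n-s:s\in\dent(\invtab)\setminus\{0\}\}$ (you state this in the equivalent contrapositive form as forced descents), and then apply reversal to turn this into the condition $D\subseteq\dent(\invtab)\setminus\{0\}$. The paper's own proof is just a terser version of the same argument.
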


\begin{proof}
  This proof is similar to that of Proposition~\ref{bk-equiv-class-size}.
  Define $S = \{ s_1, s_2, \dots, s_k\}$ to be the set of distinct
  entries in $\invtab$ with the exception of $0$.
  $$S = \dent(\invtab) \setminus \{ 0 \}.
  $$
  From the definition of the decomposition, matrices in
  $[\invtab]_\alequiv$ satisfy that columns $n-s_1$, $n-s_2$, \dots,
  $n-s_k$ are non-empty.

  Recall that, for a ballot matrix fixed under $\eta$, if there is an
  ascent at position $i$, $a_i < a_{i+1}$, then column $i$ must be
  non-empty. If there is a descent, then it may or may not be
  non-empty. Therefore the set of ascent positions in the associated
  permutation must be a subset of $n-s_1, n-s_2, \dots, n-s_k$.
  Trivially, reversing such a permutation yields a permutation whose
  descent set is a subset of ${s_1, s_2, \dots, s_k}$.

  Therefore, for any given inversion table where the distinct entries is
  $\{0\} \cup S$, the set of permutations which can be associated
  are trivially in bijection with those where the descent set is a
  subset of $S$.
\end{proof}

\begin{theorem}
  Labeled interval orders on $[n]$ are in bijection with the set
  $$\sum_{S\subseteq [n-1]} \balpha_n(S) \times \blambda_n(S).$$
  This set may be alternatively written as
  $$
  \{ (\pi, \tau) \in \sym_n \times \sym_n : D(\pi) \subseteq
  A(\tau) \}.
  $$
\end{theorem}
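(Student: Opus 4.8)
The plan is to mirror the proof of Theorem~\ref{theorem-equiv-bk}, but with the roles of the permutation and the inversion table interchanged: there we held the permutation fixed via $\bkequiv$, and here we shall hold the inversion table fixed via $\alequiv$. As before, the starting point is the bijection between labeled interval orders on $[n]$ and the fixed-point matrices $\BalMat^\eta[U]$ with $|U|=n$, obtained by combining the adapted surjection of Dukes \etal with Proposition~\ref{uniqueness-of-fixed-point}. Every such fixed point decomposes uniquely into a pair $(\pi(A),\invtab(A))$ consisting of a permutation and an inversion table, and conversely the matrix is recovered from the pair, so it suffices to biject the admissible pairs.

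First I would group the fixed points by their inversion table, that is, by the classes of $\alequiv$. Fix an inversion table $\invtab$ and set $S=\dent(\invtab)\setminus\{0\}$; note that $\invtab$ always ends in $0$, so $0\in\dent(\invtab)$ and hence $\dent(\invtab)=\{0\}\cup S$ with $S\subseteq[n-1]$. Proposition~\ref{al-equiv-class-size} then tells us two things at once: the class $[\invtab]_\alequiv$ depends only on $\dent(\invtab)$, and the permutations that may be paired with $\invtab$ are in bijection with the permutations in $\balpha_n(S)$, those with descent set contained in $S$. Separately, Corollary~\ref{lambda-inversion-table-alternative} supplies a bijection between the inversion tables with $\dent(\invtab)=\{0\}\cup S$ and the set $\blambda_n(S)$ of permutations whose ascent-bottom set is exactly $S$. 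Reading these two facts together, for each fixed $S$ the fixed points whose inversion table satisfies $\dent\setminus\{0\}=S$ are in bijection with $\blambda_n(S)\times\balpha_n(S)$, and summing over all $S\subseteq[n-1]$ yields the bijection with $\sum_{S\subseteq[n-1]}\balpha_n(S)\times\blambda_n(S)$.

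It then remains to identify this disjoint sum with the stated set. Since every $\tau\in\sym_n$ has a single ascent-bottom set, the families $\blambda_n(S)$ partition $\sym_n$; given a pair $(\pi,\tau)$ we may therefore recover $S=A(\tau)$ uniquely, and the condition $\pi\in\balpha_n(S)$ is precisely $D(\pi)\subseteq S=A(\tau)$. Hence $\sum_{S\subseteq[n-1]}\balpha_n(S)\times\blambda_n(S)=\{(\pi,\tau)\in\sym_n\times\sym_n:D(\pi)\subseteq A(\tau)\}$, as claimed.

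The bookkeeping is routine once Proposition~\ref{al-equiv-class-size} and Corollary~\ref{lambda-inversion-table-alternative} are in hand, so I expect no genuinely hard step. The one point demanding care is the orientation of the correspondence inside Proposition~\ref{al-equiv-class-size}: the fixed-point condition constrains the \emph{ascent positions} of the matrix permutation to lie in $\{n-s_1,\dots,n-s_k\}$, and only after the reversal built into that proposition does this become the clean condition $D(\pi)\subseteq S$. Keeping this reversal straight, and confirming that the construction-choice bijection of Corollary~\ref{lambda-inversion-table-alternative} places the second coordinate in $\blambda_n(S)$ (with ascent-bottom set \emph{exactly} $S$, not merely contained in $S$), is what guarantees the two factors land in $\balpha_n(S)$ and $\blambda_n(S)$ rather than in complemented or reversed variants.
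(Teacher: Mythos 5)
Your proposal is correct and follows essentially the same route as the paper: both rest on Proposition~\ref{al-equiv-class-size} (grouping fixed points by their inversion table and identifying the pairable permutations with $\balpha_n(S)$, via the reversal you rightly flag) together with Corollary~\ref{lambda-inversion-table-alternative} (matching inversion tables with $\dent=\{0\}\cup S$ to $\blambda_n(S)$). The paper's own proof is merely a terser version of your argument, omitting the explicit appeal to the fixed-point bijection and the final identification of the disjoint sum with the set $\{(\pi,\tau): D(\pi)\subseteq A(\tau)\}$, both of which you supply correctly.
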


\begin{proof}
  Corollary~\ref{lambda-inversion-table-alternative} gives that
  permutations in $\blambda_n(S)$ are in bijection with inversion
  tables with set of distinct elements $\{0\} \cup
  S$. Proposition~\ref{al-equiv-class-size} states that the
  permutations with which an inversion table $\invtab$ can be paired
  are those with their descent set a subset of $\dent(\invtab)
  \setminus \{0\}$. From definition, such permutations are those
  contained within $\balpha_n(S)$.
\end{proof}

%%%%%%%%%%%%%%%%%%%%%%%%%%%%%%%%%%%%%%%%%%%%%%%%%%%%%%%%%%%%%%%%%%%%%%%%%%%%

\bibliographystyle{plain}
%\bibliography{ref.bib}

\begin{thebibliography}{10}

\bibitem{bergeron}
Fran\c{c}ois Bergeron, Gilbert Labelle, and Pierre Leroux.
\newblock {\em Combinatorial Species and Tree-like Structures}.
\newblock Encyclopedia of Mathematics and its Applications. Cambridge
  University Press, 1998.

\bibitem{bogart}
Kenneth~P. Bogart.
\newblock An obvious proof of {F}ishburn's interval order theorem.
\newblock {\em Discrete Mathematics}, 118(1–3):239 -- 242, 1993.

\bibitem{two-plus-two-free}
Mireille Bousquet-M{\'e}lou, Anders Claesson, Mark Dukes, and Sergey Kitaev.
\newblock (2+2)-free posets, ascent sequences and pattern avoiding
  permutations.
\newblock {\em Journal of Combinatorial Theory, Series A}, 117(7):884--909,
  October 2010.

\bibitem{part-comp-matrices}
Anders Claesson, Mark Dukes, and Martina Kubitzke.
\newblock Partition and composition matrices.
\newblock {\em Journal of Combinatorial Theory, Series A}, 118(5):1624 -- 1637,
  2011.

\bibitem{comp-matrices-spec}
Mark Dukes, V{\'i}t Jel{\'i}nek, and Martina Kubitzke.
\newblock Composition matrices, (2+2)-free posets and their specializations.
\newblock {\em The Electronic Journal of Combinatorics}, 18(P44), 2011.

\bibitem{fishburn}
Peter~C. Fishburn.
\newblock Intransitive indifference with unequal indifference intervals.
\newblock {\em Journal of Mathematical Psychology}, 7(1):144 -- 149, 1970.

\bibitem{remmel-tiefenbruck}
Jeffrey Remmel and Mark Tiefenbruck.
\newblock Extending from bijections between marked occurrences of patterns to
  all occurrences of patterns.
\newblock {\em Discrete Mathematics and Theoretical Computer Science
  Proceedings, FPSAC}, pages 985--992, 2012.

\bibitem{levande-two-interp}
Paul Levande.
\newblock Fishburn diagrams, {F}ishburn numbers and their refined generating
  functions.
\newblock {\em Journal of Combinatorial Theory, Series A}, 120(1):194--217,
  January 2013.

\bibitem{enumerative-combinatorics-one}
Richard~P. Stanley.
\newblock {\em Enumerative Combinatorics:}, volume~1 of {\em Cambridge Studies
  in Advanced Mathematics}.
\newblock Cambridge University Press, 2 edition, 2011.

\bibitem{yan}
Sherry~H.F. Yan.
\newblock On a conjecture about enumerating (2+2)-free posets.
\newblock {\em European Journal of Combinatorics}, 32(2):282 -- 287, 2011.

\bibitem{zagier-chords}
Don Zagier.
\newblock Vassiliev invariants and a strange identity related to the dedekind
  eta-function.
\newblock {\em Topology}, 40(5):945 -- 960, 2001.

\end{thebibliography}

\end{document}